\documentclass[10pt]{amsart}

\usepackage[utf8]{inputenc}
\usepackage[T1]{fontenc}
\usepackage[pdftex]{graphicx}
\usepackage{latexsym}
\usepackage{amsmath,amssymb,amsthm}
\usepackage{mathtools}
\usepackage{enumitem}
\usepackage{faktor}
\usepackage{enumerate}
\usepackage{hyperref}
\usepackage{tikz}
\usepackage{xcolor}

\newtheorem{theorem}{Theorem}[section]
\newtheorem{corollary}[theorem]{Corollary}

\newtheorem{lemma}[theorem]{Lemma}
\theoremstyle{definition}

\newtheorem*{remark}{Remark}

\newcommand{\Q}{\mathcal{Q}}
\newcommand{\lk}{\mathrm{lk}(\rho)}

\newcommand{\Hmm}[1]{\leavevmode{\marginpar{\tiny%
			$\hbox to 0mm{\hspace*{-0.5mm}$\leftarrow$\hss}%
			\vcenter{\vrule depth 0.1mm height 0.1mm width \the\marginparwidth}%
			\hbox to 0mm{\hss$\rightarrow$\hspace*{-0.5mm}}$\\\relax\raggedright #1}}}
	
	\newcommand{\eat}[1]{}

\begin{document}
	\author[P. Bartmann, M. Keller]{Philipp Bartmann, Matthias Keller}
	
	\address{Philipp Bartmann: Institut f\"ur Mathematik, Universit\"at Potsdam
		14476  Potsdam, Germany}
	\email{philipp.bartmann@uni-potsdam.de}
    \address{Matthias Keller:   Israel Institute of Advanced Studies, Jerusalem, Israel; Institut f\"ur Mathematik, Universit\"at Potsdam
14476  Potsdam, Germany}
	\email{matthias.keller@uni-potsdam.de}

	\title[The complex property of the boundary operator]{The complex property of the boundary operator  on simplicial complexes}

\begin{abstract}
We study the complex property $\partial\partial = 0$ of the boundary operator $\partial$ on a weighted, infinite, and possibly non-locally finite simplicial complex. We give a characterization of this property in $\ell^2$ in terms of the recurrence of the links of simplices.
The complex property is essential to ensure that Hodge Laplacians $\Delta^H $ indeed act as  $\delta\partial + \partial\delta$ and to decompose $\Delta^H$ into a direct sum of operators acting on $k$-forms.  Furthermore, it allows us to define relative cohomology classes, show a respective weak Hodge decomposition, and prove the existence of harmonic Dirichlet eigenforms. We also discuss a transience property for simplicial complexes, that was  introduced by Parzanchevski and Rosenthal.
\end{abstract}
\maketitle

\section{Introduction}
The boundary and coboundary operators $\partial$ and $\delta$ are fundamental objects in the theory of simplicial complexes. The axiomatic \emph{complex property} of the coboundary operator says $$\delta\delta = 0.$$ 
The boundary operator $\partial$ is defined as the formal adjoint of $\delta$ and therefore it is natural to ask if $\partial\partial = 0$ holds as well. This is clearly true for finite simplicial complexes, but may fail in the infinite setting. Indeed, it was already observed by Parzanchevski and Rosenthal  in \cite[Section~3.1]{Parzanchevski} that there may be $1$-forms $\omega$ such that $$\partial\partial \omega (\varnothing)\neq0$$ whenever the set of vertices  is infinite. They also mention that such an $\omega$ can even be chosen in $\ell^2$ when the $1$-skeleton gives rise to a transient graph such as $3$-dimensional Euclidean lattices or regular trees.

This phenomenon was further explored in \cite[Example~2.5]{BK}, where it was shown that $\partial\partial\ne0$ can also occur for forms of higher degree whenever the simplicial complex is not locally finite. Clearly, to give meaning to the objects in non-locally finite settings, one has to consider weights which satisfy a local summability condition similar to the case of graphs, \cite{Soardi,KLW}. Strengthening the local summability condition, it was shown in \cite[Lemma~3.2]{BK} that $\partial\partial = 0$ holds on $\ell^2.$

In this note, we explore this phenomenon systematically and  characterize the complex property of the boundary operator $\partial$, i.e., when $$\partial\partial = 0$$ holds on $\ell^2$. We give a  characterization 
in terms of the recurrence of the graphs arising from links of simplices. In this sense, we draw a surprising connection between a probabilistic property of links and a topological property of the simplicial complex. 
The underlying idea is a localization technique in spirit of Garlands method \cite{Garland}, which allows to relate the boundary and coboundary operator to the graph Laplacians on links.

Subsequently, we give various applications of this characterization. A first motivation is the study of the Hodge Laplacian $\Delta^H,$ which is a fundamental object in the theory of simplicial complexes and its spectral properties are of great interest.
Our aim is to guarantee that  $\Delta^H $ acts indeed as
\[\Delta^H =   \delta\partial + \partial\delta\]
on its particular domain. While in general one only knows that $\Delta^H$ acts as $(\delta+\partial)^2$, the above factorization allows to decompose $\Delta^H$ into a direct sum of Hodge Laplacians $\Delta^H_k$ acting on $k$-forms. Clearly, $\partial\partial\neq 0$ is an obstruction to this decomposition as $k$-forms do not get mapped to $k$-forms again.

As a second application, we turn to the topic of Hilbert complexes with ideal boundary conditions as introduced in \cite{BL} for Riemannian manifolds. For simplicial complexes,  it was shown in \cite{BK} that $\delta,$ restricted to the  $\ell^2$ forms $\omega$ such that $\delta\omega\in\ell^2,$ always gives rise to a Hilbert complex with so called absolute boundary conditions. However, it was left open whether the same is true for $\partial.$ We show that this is indeed the case if all links are recurrent. This then allows to define the corresponding relative cohomology classes and to show that the weak Hodge decomposition holds in this setting. We also show that given $\partial\partial = 0$, the up-Laplacian, acting as $\partial\delta,$ has  a non-empty kernel consisting of eigenforms. These ``trivial'' eigenforms obviously exist for finite simplicial complexes, and this property now extends to infinite simplicial complexes with recurrent links. 

Finally, we draw a connection to a transience property $(\mathrm T')$ proposed by Parzan\-chevski and Rosenthal in \cite{Parzanchevski}.

The paper is structured as follows. In Section~2 we introduce the setting of Laplacians on simplicial complexes and graphs. In Section~3 we explain the localization technique. In Section~4 we give the characterization of $\partial\partial = 0$ in terms of the recurrence of links. In Section~5 we discuss the applications mentioned above.

\section{Laplacians on  complexes and graphs}
In this section, we introduce the setting of Laplacians on simplicial complexes and graphs. We follow the setting from \cite{BK} for simplicial complexes and the setting from \cite{KLW} for graphs. We also explain how graphs are included in the setting of simplicial complexes. However, since we are interested in associating link graphs of simplicial complexes, it is convenient to have a distinct notation for graphs at hand.\\
For a discrete set $X$, we denote the space of complex functions on $X$ by $C(X)$ and refer to the subspace of functions of finite support as $C_c(X).$ We also fix the notation \[\sum_{x\in X}f(x) = \sum_X f\] for $f\in C(X),$ whenever $f$ is nonnegative or the sum is absolutely summable.
\subsection{Laplacians on simplicial complexes}
Given a discrete set $X$ of \textit{vertices}, we call a set $\hat\Sigma\subseteq\mathcal{P}(X)$ of finite subsets of $X$ a \textit{simplicial complex} if it is closed under the operation of taking subsets. That means, if $\sigma\in\hat\Sigma$ and $\tau\subseteq\sigma,$ then $\tau\in\hat\Sigma.$ We call the elements of $\hat\Sigma$ \textit{simplices}. We refer to simplices of cardinality $k+1$ as \emph{$k$-simplices} and the set of $k$-simplices as $\hat\Sigma_k.$ The \emph{dimension} of a simplex $\tau$ is given by $\dim(\tau) = \vert\tau\vert-1$ and the dimension of $\hat\Sigma$ is $\dim(\hat\Sigma) = \sup_{\tau\in\hat\Sigma}\dim(\tau).$ Whenever it is convenient we will identify $\hat\Sigma_0$ with $X.$ Whenever $\sigma\in\hat\Sigma$ and $\tau\subseteq\sigma$ with $\vert\sigma\backslash\tau\vert = 1$, we write $\tau\prec\sigma$ or $\sigma\succ\tau$ and say that $\tau$ is a \textit{face} of $\sigma$ or that $\sigma$ is a \textit{coface} of $\tau.$\\
We fix a weight function $m:\hat\Sigma\rightarrow(0,\infty),$ which we interpret as a discrete measure on $\hat\Sigma$ by the convention $m(A) = \sum_{\tau\in A}m(\tau)$ for all $A\subseteq\hat\Sigma.$ \\
In order to define operators and function spaces for infinite simplicial complexes that are natural and common in the finite setting, we will need that 
\[\sum_{\sigma\succ\tau}m(\sigma)<\infty\] 
for all $\tau .$ Notice, however, that every simplicial complex contains the single $-1$-dimensional simplex $\varnothing,$ for which the above requirement means \[\sum_{x\in X}m(x)<\infty\] because every vertex $x\in X$ is a coface of $\varnothing.$ This is, of course, very restrictive since, on the level of graphs, this means that the total measure is finite. 
Simply excluding $\varnothing$ from that requirement in all cases is not a good option either, as it breaks consistency with the established theory for finite simplicial complexes. Indeed, in some cases it is desirable to have $\varnothing$ included. 
To stay flexible and not exclude $\varnothing$ if not necessary, we define $\Sigma$ as $\Sigma = \hat\Sigma$ if $\sum_{x\in X}m(x)<\infty$ and $\Sigma = \hat\Sigma\backslash\lbrace\varnothing\rbrace$ otherwise and demand \[\sum_{\sigma\succ\tau}m(\sigma)<\infty\] for all $\tau\in\Sigma.$ In that case we call $m$ \textit{locally summable} and call the pair $(\Sigma,m)$ a \textit{weighted simplicial complex}, which is a standing assumption for the rest of the paper.\\
Now let $1_\tau\in C_c(\Sigma)$ be the characteristic function of $\tau\in\Sigma.$
We say that a linear operator $\delta:C(\Sigma)\rightarrow C(\Sigma)$ is a \textit{coboundary} operator on $\Sigma$ if it satisfies $\delta 1_\tau(\sigma)\in\lbrace\pm1,0\rbrace$ for all $\tau,\sigma\in\Sigma $,  $\delta1_\tau(\sigma)\neq 0$ iff $\tau\prec\sigma$ and
\[\delta\delta = 0.\]

 Since every simplex has finitely many faces, the action of $\delta$ is given for every $\omega\in C(\Sigma)$ and $\sigma\in\Sigma$ by the finite sum \[\delta\omega(\sigma) = \sum_{\tau\prec\sigma}\theta(\tau,\sigma)\omega(\tau),\] where $\theta(\tau,\sigma) = \delta1_\tau(\sigma)\in\lbrace\pm1\rbrace.$

\begin{remark}
    We  define the coboundary operator on functions over $\Sigma,$ rather than, more commonly, alternating forms over oriented simplices, cf. \cite{HJ,Parzanchevski,RT}. We do this because it naturally fits the formalism of weighted graphs from \cite{KLW} and eliminates the need to define oriented simplicial complexes. Both settings are essentially equivalent, up to a choice of \textit{orientation} that is, in essence, an ordering of the set of vertices $X$, see \cite[Subsection~2.2]{BK}.
\end{remark} 
 
We define the \textit{boundary operator} $\partial: D(\partial)\rightarrow C(\Sigma)$ through the action
\[\partial\omega(\rho) = \frac{1}{m(\rho)}\sum_{\tau\succ\rho}m(\tau)\theta(\rho,\tau)\omega(\tau)\] for all $\rho\in\Sigma$ and $\omega\in D(\partial)$ with \[D(\partial) = \lbrace\omega\in C(\Sigma)\mid\sum_{\tau\succ\rho}m(\tau)\vert\omega(\tau)\vert<\infty\quad\text{for all}~\rho\in\Sigma\rbrace.\]
With the boundary and coboundary operator at hand we define the quadratic forms
that give rise to the up-, down-, and Hodge Laplacian
\begin{align*}
  \Q^+:C(\Sigma)\rightarrow[0,\infty],&&  \Q^+(\omega) &= \sum_{\Sigma}m\vert\delta\omega\vert^2,\\
  \Q^-:D(\partial)\rightarrow[0,\infty],&&  \Q^-(\omega) &=\sum_{\Sigma}m\vert\partial\omega\vert^2,\\
 \Q^H:D(\partial)\rightarrow[0,\infty],&&   \Q^H(\omega) &= \sum_{\Sigma}m\vert\delta\omega+\partial\omega\vert^2.
\end{align*}
The Banach spaces $\ell^p = \ell^p(\Sigma,m)$, $p\in[1,\infty]$, with norm ${\|\cdot\|}_p$ are defined in the usual way, and we note  that
$\ell^2$ is a Hilbert space with inner product $\langle f,g\rangle = \sum_\Sigma mf\overline{g}.$ Notice that by local summability and the H\"older inequality, one always has $\ell^p\in D(\partial)$ for all $p\in [1,\infty].$\\
Let $\circ\in\lbrace \pm,H\rbrace.$ We are interested in two particular restrictions of $\Q^\circ,$ which arise from natural boundary conditions. Let $Q^\circ_D\subseteq\Q^\circ$ be the closed quadratic form on $\ell^2$ with domain \[D(Q^\circ_D) =  \lbrace \omega\in\ell^2\mid\lim_{n\to\infty}\left(\Vert\omega_n-\omega\Vert^2_2+\mathcal{Q}^\circ(\omega_n-\omega)\right) = 0 \mbox{ for some }\omega_n\in C_c(\Sigma)\,\rbrace.\]
Moreover, let $Q^\circ_N\subseteq\Q^\circ$ be the closed quadratic form with domain 
\[D(Q^\circ_N) = \lbrace\omega\in\ell^2\mid\Q^\circ(\omega)<\infty\rbrace.\]

We also define $$\partial_D = \partial\mid_{D(Q^-_D)},\quad \partial_N = \partial\mid_{D(Q^-_N)},\quad\delta_D = \delta\mid_{D(Q^+_D)}\quad\mbox{ and }\quad\delta_N = \delta\mid_{D(Q^+_N)}$$ 
as well as $(\delta+\partial)_D = (\delta+\partial)\mid_{D(Q^H_D)}$ and $(\delta+\partial)_N = (\delta+\partial)\mid_{D(Q^H_N)}.$
We call $\delta_D$ and $\partial_D$ (respectively  $\delta_N$ and $\partial_N$) the coboundary and boundary operator with \textit{Dirichlet} (respectively \textit{Neumann}) \textit{boundary conditions}. These operators are closed and densely defined on $\ell^2$. Furthermore,  the closed quadratic forms $Q^\circ_D$ and $Q^\circ_N,$ give rise to the corresponding up-, down- and Hodge Laplacians which are self-adjoint operators and satisfy, cf.~\cite[Corollary~3.8]{BK},
$$\Delta^+_{D} = \partial_N\delta_D,\quad\Delta^-_{D} = \delta_N\partial_D\quad\text{and}\quad\Delta^H_{D}= (\delta+\partial)_N(\delta+\partial)_D,$$
    $$\Delta^+_N = \partial_D\delta_N,\quad\Delta^-_N = \delta_D\partial_N\quad\text{and}\quad\Delta^H_N = (\delta+\partial)_D(\delta+\partial)_N.$$

\subsection{Laplacians on graphs}\label{graphs}
Although graphs are $1$-dimensional simplicial complexes and correspondingly their Laplacians are included in the setting above, we  introduce them here separately for specific pedagogical reasons. First of all, we study links of simplicial complexes which are graphs and, therefore, want to distinguish these in parallel studied settings clearly in notation. Secondly, since we aim to apply the theory developed on graphs to the links, it turns out to be more convenient to have the setting for graphs  from \cite{KLW} at hand. So, given a discrete set $X,$ a \emph{graph} over $X$ is a symmetric function $b:X\times X\rightarrow [0,\infty)$ that has a vanishing diagonal and is locally summable, i.e., for all $x\in X$ one has $b(x,x) = 0$ and $$\sum_{y\in X}b(x,y)<\infty.$$ 
We define the quadratic form $\mathcal{Q}$ as 
\[\mathcal{Q}(f) = \frac{1}{2}\sum_{x,y\in X}b(x,y)\vert f(x)- f(y)\vert^2,\] which takes finite values for functions $f$ of \emph{finite energy}, i.e., in the space \[ \mathcal{D}=\lbrace g\in C(X)\mid \sum_{x,y\in X}b(x,y)\vert g(x)-g(y)\vert^2<\infty\rbrace.\]

Furthermore, similar to simplicial complexes, but only on the vertex set, we fix a weight function $m:X\rightarrow(0,\infty)$ and define the \emph{formal graph Laplacian} $\mathcal{L}$ via 
\[\mathcal{L}f(x) = \frac{1}{m(x)}\sum_{y\in X}b(x,y)(f(x)-f(y)),\]
on functions $f$ in the \emph{formal domain} \[ \mathcal{F} = \lbrace g\in C(X)\mid\sum_{y\in X}b(x,y)\vert g(y)\vert<\infty\text{ for all } x\in X\rbrace.\] 

\begin{remark} As mentioned above, graphs are weighted $1$-dimensional simplicial complexes. Given a graph $b$ over $(X,m)$, the edges are given by $\lbrace x,y\rbrace$ with $b(x,y)>0$  and $m$ extends to the edges via $m(\lbrace x,y\rbrace) = b(x,y).$ 
    Furthermore, one then  finds \[\mathcal{D} = \lbrace f\in C(X)\mid\delta f\in\ell^2\rbrace\quad\text{and} \quad\mathcal{Q}(f) = \Vert\delta f\Vert_2^2 = \mathcal{Q}^+(f).\] The graph Laplacian $\mathcal{L}$ acts as $\partial\delta.$
\end{remark}

\section{Localization}

In the section above, we have seen that graphs may naturally be interpreted as simplicial complexes. Here, we find graph structures in a different way within simplicial complexes via links of simplices. Reducing the operators on the simplicial complexes to those links is similar to Garlands famous localization method, cf.~\cite{Garland}. Similar methods were also used in \cite{AharoniBergerMeshulam,ballmann1997l2,GundertWagner,lew2025eigenvalue,oppenheim_local}.\medskip

For a weighted simplicial complex $(\Sigma,m)$, we define the \emph{link} of $\rho\in\Sigma$ as 
$$\mathrm{lk}(\rho) = \lbrace v\in\Sigma_0\mid (v\cup\rho)\succ\rho\rbrace$$
and, for $v\in\lk$, we write $$v\rho=v\cup\rho.$$
On $\lk$, we define $m_\rho\colon\lk\rightarrow(0,\infty)$ and $b_\rho:\lk\times\lk\rightarrow [0,\infty)$ as
$$m_\rho(v) = m(v\rho),\qquad b_\rho(v,v') = m( vv'\rho),\qquad b_\rho(v,v) = 0$$ for all $v,v'\in\lk$ with $v\neq v'.$

By virtue of local summability of $m$, we obtain a graph over $\lk$ in the sense of Subsection \ref{graphs}. Let $\mathcal{L}_\rho$ be the formal Laplacian of the graph $b_\rho$ with formal domain $\mathcal{F}_\rho$ and let $\mathcal{Q}_\rho$ be the associated quadratic form with formal domain $\mathcal{D}_\rho.$ We will write $\ell^p_\rho$ for $\ell^p(\lk,m_\rho)$ and $p\in [1,\infty].$

To relate $\partial$ and $\delta$ to $\mathcal{L}_\rho,$ we define the maps
\[\pi^\rho\colon C(\lk)\rightarrow C(\Sigma),\quad \pi^\rho u(\tau) = 1_{\tau\succ\rho}u(\tau\backslash\rho)\theta(\rho,\tau)\]
and 
\[\pi_\rho\colon C(\Sigma)\rightarrow C(\lk),\quad \pi_\rho\omega(v) =\theta(\rho,v\rho)\omega(v\rho). \]

Notice that, for $u\in C_c(\lk)$ and $\omega\in C_c(\Sigma)$, one has $\pi^\rho u\in C_c(\Sigma)$ and $\pi_\rho\omega\in C_c(\lk).$ Similarly, for $u\in\ell^p_\rho$ and $\omega\in\ell^p$ we have $\pi^\rho u\in\ell^p$ and $\pi_\rho\omega\in\ell^p_\rho.$ 
 If $\omega\in C(\Sigma)$ and $\varphi\in C_c(\lk),$ then \[\sum_{\lk} m_\rho(\pi_\rho\omega)\varphi = \sum_{\Sigma} m\omega(\pi^\rho\varphi).\]
Further, note that \[m_\rho(\lk) = \sum_{\tau\succ\rho}m(\tau)<\infty\] by local summability. Consequently, we always find that the constant function $1$ is in $\ell^p_\rho$ for all $p\in[1,\infty].$

The next lemma captures exactly what we mean by localization. It shows how the boundary and coboundary operator on the simplicial complex are related to the graph Laplacian on the link. Moreover, it relates the quadratic form $\mathcal{Q}_\rho$ to $\mathcal{Q}^+.$ 

\begin{lemma}[Localization lemma] \label{locallemma}Let $\rho\in\Sigma.$
    \begin{itemize}
        \item [(a)] If $\omega\in D(\partial),$ then $\pi_\rho\omega\in \ell^1_\rho $ and \[m(\rho)\partial\omega(\rho) = \sum_{\lk}m_\rho\pi_\rho\omega.\]
        \item[(b)] If $u\in \ell^1_\rho,$ then $\pi^\rho u\in D(\partial)$ and \[\sum_{\lk}m_\rho u = m(\rho)\partial \pi^\rho u(\rho).\]
        \item [(c)] If $u\in\mathcal{D}_\rho,$ then $\delta\pi^\rho u\in\ell^2$ and
        \[\mathcal{Q}_\rho(u) = \mathcal{Q}^+(\pi^\rho u).\]
        \item[(d)] If $u\in\mathcal{D}_\rho$ and $\tau\succ\rho,$ then
        \[\pi^\rho\mathcal{L}_\rho u(\tau) = \partial\delta \pi^\rho u(\tau).\]
        \item[(e)] If $u\in\mathcal{D}_\rho$ and $\mathcal{L}_\rho u\in\ell^1_\rho,$ then $\partial\delta \pi^\rho u\in D(\partial).$
    \end{itemize}
\end{lemma}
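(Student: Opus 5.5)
The plan is a direct computation that rests on two facts. First, $v\mapsto v\rho$ is a bijection from $\lk$ onto the cofaces of $\rho$, and $\pi^\rho u$ is supported on these cofaces. Second, the complex property $\delta\delta=0$ gives a sign identity. For (a) and (b) I would unfold the definitions: $\sum_{\lk} m_\rho|\pi_\rho\omega| = \sum_{\tau\succ\rho} m(\tau)|\omega(\tau)|<\infty$ for $\omega\in D(\partial)$. Then
\[
\sum_{\lk} m_\rho\pi_\rho\omega=\sum_{\tau\succ\rho}m(\tau)\theta(\rho,\tau)\omega(\tau)=m(\rho)\partial\omega(\rho).
\]
Part (b) is the same identity read backwards, using $\theta^2=1$. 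To get $\pi^\rho u\in D(\partial)$ I would check every $\eta\in\Sigma$. The cofaces of $\eta$ that lie in the support of $\pi^\rho u$ are either all $v\rho$ (when $\eta=\rho$, controlled by $u\in\ell^1_\rho$), or at most one simplex (when $\eta = v(\rho\setminus\{x\})$ with $\tau=v\rho$), or none.

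For (c) and (d) the key step is to compute $\delta\pi^\rho u$. A simplex $\sigma$ has a face that is a coface of $\rho$ only if $\sigma=vv'\rho$ with $v\neq v'$ in $\lk$; there are no other cases, since $\sigma\succ\rho$ has no face of the same cardinality. For such $\sigma$,
\[
\delta\pi^\rho u(vv'\rho)=\theta(vρ,\sigma)\theta(\rho,v\rho)u(v)+\theta(v'\rho,\sigma)\theta(\rho,v'\rho)u(v').
\]
Evaluating $\delta\delta 1_\rho(\sigma)=0$ gives $\theta(\rho,v\rho)\theta(v\rho,\sigma)=-\theta(\rho,v'\rho)\theta(v'\rho,\sigma)$, so the expression equals $\theta(\rho,v\rho)\theta(v\rho,\sigma)\,(u(v)-u(v'))$. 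Summing $m(\sigma)|\cdot|^2$ over unordered pairs $\{v,v'\}$ gives exactly $\tfrac12\sum b_\rho(v,v')|u(v)-u(v')|^2=\mathcal{Q}_\rho(u)$, which is (c). For (d) I would plug this into $\partial$ at $\tau=v\rho$. The cofaces of $v\rho$ on which $\delta\pi^\rho u$ is nonzero are the $vv'\rho$, and the signs $\theta(v\rho,\sigma)^2=1$ cancel, giving
\[
\partial\delta\pi^\rho u(v\rho)=\frac{\theta(\rho,v\rho)}{m_\rho(v)}\sum_{v'}b_\rho(v,v')(u(v)-u(v'))=\pi^\rho\mathcal{L}_\rho u(v\rho).
\]
Absolute convergence here, and also $\delta\pi^\rho u\in D(\partial)$, would follow from Cauchy--Schwarz: $\sum_{v'}b_\rho(v,v')|u(v)-u(v')|\le (\sum_{v'}b_\rho(v,v'))^{1/2}(2\mathcal{Q}_\rho(u))^{1/2}$, and $\sum_{v'}b_\rho(v,v')$ is finite by local summability at $v\rho$.

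Part (e) is where I expect the real work. The function $\partial\delta\pi^\rho u$ is also supported on simplices $\tau=vv'(\rho\setminus\{x\})$ with $x\in\rho$, not only on the $v\rho$. For such $\tau$ the only relevant coface is $\tau\cup\{x\}=vv'\rho$, so $m(\tau)|\partial\delta\pi^\rho u(\tau)|=b_\rho(v,v')|u(v)-u(v')|$. I would then check $\sum_{\tau\succ\eta}m(\tau)|\partial\delta\pi^\rho u(\tau)|<\infty$ case by case in $\eta$.
\begin{itemize}
\item For $\eta=\rho$ the sum is $\sum_{\lk}m_\rho|\mathcal{L}_\rho u|<\infty$ by (d) and the hypothesis $\mathcal{L}_\rho u\in\ell^1_\rho$.
\item For $\eta=v(\rho\setminus\{x\})$ the sum runs over $v'$ with $v$ fixed, plus the single simplex $v\rho$. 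It is finite by the Cauchy--Schwarz bound above.
\item For $\eta=\rho\setminus\{x\}$ one must sum $b_\rho(v,v')|u(v)-u(v')|$ over all pairs. This is the step I expect to be the main obstacle.
\end{itemize}
In the last case I would first try Cauchy--Schwarz against $\sum_{v,v'}b_\rho(v,v')$. Finiteness of this total is not obviously implied by local summability at $\rho\setminus\{x\}$ alone, so I would look for a way to exploit $\mathcal{L}_\rho u\in\ell^1_\rho$ together with $m_\rho(\lk)<\infty$ (for instance splitting off the part of the sum controlled by $\mathcal{L}_\rho u$). Failing that, I would isolate the precise summability this step needs.
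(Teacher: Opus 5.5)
\textbf{Parts (a)--(d).} These are correct and follow the paper's computation. You use the sign identity from $\delta\delta 1_\rho(vv'\rho)=0$, and then Cauchy--Schwarz with local summability at $v\rho$.

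\textbf{The gap in (e).} The gap is a bookkeeping error, not a missing estimate. Membership in $D(\partial)$ only asks that $\sum_{\tau\succ\eta} m(\tau)|\partial\delta\pi^\rho u(\tau)|<\infty$, where the sum runs over \emph{cofaces} $\tau$ of $\eta$, i.e.\ $|\tau|=|\eta|+1$. The function $\partial\delta\pi^\rho u$ is supported on the simplices $v\rho$ and $vv'(\rho\setminus\{x\})$, all of cardinality $|\rho|+1$. So only $\eta$ with $|\eta|=|\rho|$ can contribute.

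Your third bullet, $\eta=\rho\setminus\{x\}$, is therefore an empty sum: $vv'(\rho\setminus\{x\})$ lies two levels above $\rho\setminus\{x\}$, not one. The obstacle you anticipate there does not exist. The route you sketch for it would fail in any case. Cauchy--Schwarz against $\sum_{v,v'}b_\rho(v,v')$ needs total summability of $b_\rho$, and that forces every connected component of $b_\rho$ to be recurrent. So this bound is unavailable exactly where (e) is used, namely the transient direction of Theorem~\ref{MainResult}.

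\textbf{The case you are missing.} Take $\eta\neq\rho$ with $|\eta|=|\rho|$ and $|\rho\setminus\eta|=2$, e.g.\ $\eta=vv'(\rho\setminus\{x,y\})$. Any contributing $\sigma$ must contain $\rho\cup\eta$ and have cardinality $|\rho|+2=|\rho\cup\eta|$. Hence $\sigma=\rho\cup\eta$ is unique, and the sum has at most two terms, namely $\tau=vv'(\rho\setminus\{x\})$ and $\tau=vv'(\rho\setminus\{y\})$.

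Every relevant $\eta$ lies inside some $vv'\rho$, so $|\rho\setminus\eta|=|\eta\setminus\rho|\le 2$. The following three cases are therefore exhaustive:
\begin{itemize}
\item $\eta=\rho$ (your first bullet);
\item $|\rho\setminus\eta|=1$, i.e.\ $\eta=v(\rho\setminus\{x\})$ (your second bullet, handled by local summability at $\rho\cup\eta=v\rho$);
\item $|\rho\setminus\eta|=2$, as above.
\end{itemize}
This is precisely the paper's split into $|\rho\setminus\rho'|=1$ and $|\rho\setminus\rho'|\ge 2$. With the cases corrected, your argument for (e) closes with no further input.
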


\begin{proof} It is straightforward to check    (a) and (b).
    
    To see (c), note that for $\sigma\in\Sigma$ and $\rho\subseteq\sigma$ with $\dim(\rho)+2=\dim(\sigma)$, there are exactly two vertices $v\neq v'$ such that $$v\rho,v'\rho\prec\sigma.$$
    For these $v$ and $v'$,
    one has $\theta(\rho,v\rho)\theta(v\rho,\sigma)+\theta(\rho,v'\rho)\theta(v'\rho,\sigma) =\delta\delta1_\rho(\sigma) =  0.$ Thus,
    \begin{align*}
         \mathcal{Q}^+(\pi^\rho u) &= \sum_{\sigma\in\Sigma}m(\sigma)\Big\vert\sum_{\tau\prec\sigma,\rho\prec\tau}\theta(\tau,\sigma)\theta(\rho\,\tau)u(\tau\backslash\rho)\Big\vert^2 \\&= \frac{1}{2}\sum_{v,v'\in\lk}b_\rho(v,v')\vert u(v)-u(v')\vert^2 = \mathcal{Q}_\rho(u).
    \end{align*}
    To see (d), note first that $u\in\mathcal{F}_\rho$ according to \cite[Proposition~1.4~(b)]{KLW}. Moreover, it follows from (c) that $\delta \pi^\rho u\in\ell^2\subseteq D(\partial).$ Now, by a similar argument as in (c)  
    \begin{align*}
        m(\tau)\partial\delta \pi^\rho u(\tau)&= \sum_{\sigma\succ\tau}m(\sigma)\theta(\tau,\sigma)\sum_{\tau'\prec\sigma}\theta(\tau',\sigma)1_{\rho\prec\tau'}\theta(\rho,\tau')u(\tau'\backslash\rho)\\
        & = \theta(\rho,\tau)\sum_{v\in\lk}b_\rho(\tau\backslash\rho,v)
        (u(\tau\backslash\rho)-u(v))\\
        &= \theta(\rho,\tau)m_\rho(\tau\backslash\rho)\mathcal{L}_\rho u(\tau\backslash\rho) = m(\tau) \pi^\rho\mathcal{L}_\rho u(\tau).
    \end{align*}

In order to see (e), let $\rho'\in\Sigma$ and 
\[\sum_{\tau\succ\rho'} m(\tau)\vert\partial\delta \pi^\rho u\vert(\tau) = \sum_{\tau\succ\rho,\rho'}m(\tau)\vert\pi^\rho\mathcal{L}_\rho u\vert(\tau)+\sum_{\tau\nsucc\rho,\tau\succ\rho'}m(\tau)\vert\partial\delta \pi^\rho u\vert(\tau).\] Here we used (d) for the first sum, which we conclude to be finite as $\mathcal{L}_\rho u\in\ell^1_\rho.$ The second sum is also finite, as the following estimate shows
\begin{align*}
    \sum_{\tau\nsucc\rho,\tau\succ\rho'}m(\tau)\vert\partial\delta \pi^\rho u\vert(\tau)&\leq \sum_{\tau\nsucc\rho,\tau\succ\rho'}\sum_{\sigma\succ\tau}m(\sigma)\vert\delta \pi^\rho u\vert(\sigma)\\
    &\leq \Vert \delta \pi^\rho u\Vert_2\left(\sum_{\tau\nsucc\rho,\tau\succ\rho'}\sum_{\sigma\succ\tau,\sigma\supseteq\rho} m(\sigma)\right)^{1\slash 2}.
\end{align*}
The double sum in the last estimate is finite, since the inner sum runs over a subset of all $\sigma\in\Sigma$ that contain both $\rho$ and $\rho'.$ More precisely, if $\vert\rho\backslash\rho'\vert\geq 2,$ then there can be at most one such $\sigma.$ As $\rho\neq\rho',$ the only other case is $\vert\rho\backslash\rho'\vert = 1$ in which $\sigma\succ\rho\cup\rho'$ and finiteness follows from local summability, as then \[\sum_{\tau\nsucc\rho,\tau\succ\rho'}\sum_{\sigma\succ\tau}m(\sigma)\leq (\dim(\rho)+2)\sum_{\sigma\succ\rho\cup\rho'}m(\sigma).\]
This finishes the proof.
\end{proof}

\section{Characterization of $\partial\partial = 0$}
In this section, we apply the localization technique to characterize when the boundary operator $\partial$ satisfies the complex property $\partial\partial = 0.$ While    $\delta\delta=0$ holds by definition, it was shown in \cite[Example~2.5]{BK} that $\partial\partial = 0$ can fail.
On the other hand, in \cite[Lemma~3.2]{BK} we proved that $\partial\partial = 0$ in $\ell^2,$ given that $m$ satifies for all $\rho\in\Sigma$ the strong local summability condition
\[\sum_{\tau\succ\rho}\sum_{\sigma\succ\tau}m(\sigma)<\infty.\] 
Interpreting this in terms of links, this means that for all $\rho\in\Sigma$ one has that $b_\rho$ has totally summable edge weights, i.e., \[\sum_{v,v'\in\lk}b_\rho(v,v')<\infty.\] Such graphs are clearly recurrent and were studied e.g. in \cite[Section~4.2]{GHKLW}.

Here we give a  characterization of this fact for functions in $\ell^2$ in terms of the recurrence of the graphs  $b_\rho$ arising from the links. We recall that a connected  graph  $b$ over $X$ is called \emph{recurrent} if the associated Markov chain $Y_n$ with transition probability \[\mathbb{P}(Y_n = y\mid Y_{n-1} = x) = \frac{b(x,y)}{\sum_z b(x,z)} \]
for $x,y\in X$ and $n\geq 1$, 
returns almost surely to every vertex, i.e., for all $x,y\in X$
\[\mathbb{P}(Y_k = y\;\text{ for some }\; k>0\mid Y_0 = x) = 1.\]
A connected graph that is not recurrent is called \textit{transient}. The notion of recurrence can be characterized analytically by the fact that the constant function $1$ can be approximated both pointwise and in energy, i.e., in $\mathcal{Q}$, by a sequence of compactly supported functions. For details, we refer to standard text books  \cite{Woe00} or \cite[Section~2.5 and Section~6.6]{KLW}.

The following is the main result of this section and the paper. It shows that $\partial\partial = 0$ on $\ell^2$ is equivalent to the recurrence of all links. In particular, it shows that the failure of $\partial\partial = 0$ on $\ell^2$ is a local phenomenon that can be detected by analyzing the links.

\begin{theorem}\label{MainResult}
   Let $\rho\in\Sigma.$ Then the following are equivalent:
    \begin{itemize}

        \item [(i)]$\partial\partial\omega(\rho) = 0$ for all $\omega\in        D(\partial\partial)\cap\ell^2.$
        \item [(ii)] All connected components of the graph $b_\rho$ over $\lk$ are recurrent.
    \end{itemize}
\end{theorem}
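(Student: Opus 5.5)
The strategy is to reduce both implications to the link graph $b_\rho$ via Lemma~\ref{locallemma}, so that $m(\rho)\partial\partial\omega(\rho)$ becomes the ``total divergence'' $\sum_{\lk} m_\rho\,\pi_\rho\partial\omega$ of an $\ell^2$-object on the link. Recurrence is then exactly the condition that such total divergences vanish. The key observation is the following. If $\omega\in D(\partial\partial)$, then $\partial\omega\in D(\partial)$, so by Lemma~\ref{locallemma}(a) we have $\pi_\rho\partial\omega\in\ell^1_\rho$ and
\[m(\rho)\partial\partial\omega(\rho)=\sum_{\lk}m_\rho\,\pi_\rho\partial\omega .\]

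\textbf{(ii)$\Rightarrow$(i).} Let $\omega\in D(\partial\partial)\cap\ell^2$. By recurrence of each component of $b_\rho$ we choose $e_n\in C_c(\lk)$ with $0\le e_n\le 1$, $e_n\to 1$ pointwise and $\mathcal{Q}_\rho(e_n)\to 0$. Such a sequence exists on each component, and we take a diagonal sequence over an exhaustion by finitely many components; the cutoff to $[0,1]$ does not increase energy.

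Since $\pi_\rho\partial\omega\in\ell^1_\rho$, dominated convergence gives
\[\sum_{\lk}m_\rho(\pi_\rho\partial\omega)e_n\to m(\rho)\partial\partial\omega(\rho).\]
On the other hand, the duality identity for $\pi_\rho,\pi^\rho$ with $e_n$ compactly supported gives
\[\sum_{\lk}m_\rho(\pi_\rho\partial\omega)e_n=\sum_\Sigma m\,\partial\omega\,\pi^\rho e_n .\]
Next we use the formal adjointness $\langle\partial\omega,\varphi\rangle=\langle\omega,\delta\varphi\rangle$ for $\varphi\in C_c(\Sigma)$ and $\omega\in\ell^2$. This is a finite rearrangement, since $\delta\varphi$ has support contained in cofaces of the finitely many simplices in $\operatorname{supp}\varphi$, and local summability makes all sums absolutely convergent. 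It turns the last expression into $\langle\omega,\delta\pi^\rho e_n\rangle$. By Cauchy--Schwarz and Lemma~\ref{locallemma}(c), this is bounded in modulus by
\[\Vert\omega\Vert_2\,\mathcal{Q}^+(\pi^\rho e_n)^{1/2}=\Vert\omega\Vert_2\,\mathcal{Q}_\rho(e_n)^{1/2}\to0 .\]
Hence $\partial\partial\omega(\rho)=0$.

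\textbf{(i)$\Rightarrow$(ii).} Suppose some connected component $K$ of $b_\rho$ is transient, and fix $x_0\in K$. We take $u$ to be the Green function of $K$ with pole at $x_0$ (extended by $0$ off $K$). From the standard theory of transient graphs in \cite{KLW}, it is nonnegative, has finite energy, so $u\in\mathcal{D}_\rho$, and satisfies $\mathcal{L}_\rho u=1_{x_0}/m_\rho(x_0)$. In particular $\mathcal{L}_\rho u\in\ell^1_\rho$.

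Set $\omega=\delta\pi^\rho u$. By Lemma~\ref{locallemma}(c), $\omega\in\ell^2\subseteq D(\partial)$, and by (e), $\partial\omega\in D(\partial)$, so $\omega\in D(\partial\partial)\cap\ell^2$. By (d), $\pi_\rho\partial\omega=\pi_\rho\pi^\rho\mathcal{L}_\rho u=\mathcal{L}_\rho u$, using $\theta^2=1$. Then (a) yields
\[m(\rho)\partial\partial\omega(\rho)=\sum_{\lk}m_\rho\mathcal{L}_\rho u=1\neq0,\]
contradicting (i).

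The main obstacle is this direction, specifically producing a finite-energy $u$ with $\mathcal{L}_\rho u\in\ell^1_\rho$ and nonzero total sum $\sum m_\rho\mathcal{L}_\rho u$, together with checking that the Green function indeed lies in $\mathcal{D}_\rho$ and solves $\mathcal{L}_\rho u=1_{x_0}/m_\rho(x_0)$ pointwise in the possibly non-locally finite setting. I would first cite the characterization of transience in \cite[Section~6.6]{KLW}. As a fallback, I would construct $u$ as the monotone limit of Dirichlet solutions on finite exhaustions $K_n\uparrow K$, for which energies are bounded by $1/\mathrm{cap}(x_0)$, and pass to the limit using \cite[Proposition~1.4]{KLW}.
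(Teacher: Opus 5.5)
Your proposal is correct and follows the paper's proof essentially step by step. For (ii)$\Rightarrow$(i) you use the same cutoff sequence with $\mathcal{Q}_\rho(e_n)\to 0$, together with Lemma~\ref{locallemma}(a), duality, Stokes and Cauchy--Schwarz with (c). For (i)$\Rightarrow$(ii) you push a finite-energy $u$ from \cite[Theorem~6.1]{KLW} through $\omega=\delta\pi^\rho u$ via (c), (d), (e). The differences are cosmetic: you specialize $u$ to a monopole (which the paper itself uses in its next theorem), and you read off the nonzero value via (d) and (a) rather than (b) and (d).
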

\begin{proof}
    First, let us assume $(ii)$ and let $X_k$, $k\geq1$, be an enumeration of the connected components of $b_\rho.$ Then, according to \cite[Theorem~6.1]{KLW} we find for every $k$ a sequence $\varphi_n^{(k)}\in C_c(\lk)$ such that $0\leq\varphi_n^{(k)}\leq 1$, $\varphi_n^{(k)}\rightarrow 1_{X_k}$ pointwise and $\mathcal{Q}_\rho(\varphi_n^{(k)})\rightarrow0$ as $n\rightarrow\infty.$ We may assume that $\mathcal{Q}_\rho(\varphi_n^{(k)})\leq \frac{1}{n2^k}.$ Let $\omega\in D(\partial\partial)\cap\ell^2$ and let \[\varphi_n = \sum_{k=1}^{n}\varphi_n^{(k)}.\] Then $\varphi_n\in C_c(\lk)$ converges pointwise to the constant function $1$ and \[\mathcal{Q}_\rho(\varphi_n) = \sum_{k=1}^n\mathcal{Q}_\rho(\varphi_n^{(k)})\leq \frac{1}{n}.\] We then use Lemma~\ref{locallemma}~(a), dominated convergence, Stokes' Theorem, \cite[Theorem~2.9]{BK}, and Lemma~\ref{locallemma}~(a) again to obtain the following estimate
    \begin{multline*}
        m(\rho)\vert\partial\partial\omega(\rho)\vert = \lim_{n\rightarrow \infty}\Big\vert\sum_{\lk}m_\rho(\pi_\rho\partial\omega)\varphi_n\Big\vert
        = \lim_{n\rightarrow \infty} \Big\vert\sum_{\Sigma}m(\partial\omega)(\pi^\rho\varphi_n)\Big\vert\\
        = \lim_{n\rightarrow \infty}\Big\vert\sum_\Sigma m\omega(\delta\pi^\rho\varphi_n)\Big\vert
        \leq \lim_{n\rightarrow \infty}\Vert\omega\Vert_2\mathcal{Q}^+(\pi^\rho\varphi_n)^{1\slash 2}.
    \end{multline*}

    From Lemma \ref{locallemma} (b) we conclude that $\partial\partial\omega(\rho) = 0.$\\
    Now, let us assume that $(ii)$ does not hold. Then, according to \cite[Theorem~6.1]{KLW} we find $u\in\mathcal{D}_\rho$ such that $\mathcal{L}_\rho u\in\ell^1_\rho$ and 
    $$0\neq \sum_{\lk}m_\rho\mathcal{L}_\rho u= m(\rho)\partial\pi^\rho\mathcal{L}_\rho u(\rho),$$ 
    where the second equality follows from Lemma~\ref{locallemma}~(b).
    By Lemma~\ref{locallemma} (e) we have $\partial\delta \pi^\rho u\in D(\partial)$ and
    we conclude from Lemma~\ref{locallemma}~(d) that 
        \[0\neq m(\rho)\partial\partial\delta\pi^\rho u(\rho)\]
    which finishes the proof since $\delta\pi^\rho u\in\ell^2$ as $u\in\mathcal{D}_\rho$ by  Lemma~\ref{locallemma}~(c).
\end{proof}
\begin{remark}
    It is natural to ask for conditions when $\partial\partial\omega(\rho) = 0$ for a larger class of functions $\omega.$ In fact, it is not difficult to adapt the argument in Theorem~\ref{MainResult} to show that the recurrence of all connected components of $b_\rho$ implies that $\partial\partial\omega(\rho) = 0$ for all $\omega\in\ell^p$, $p\in[1,2].$ The reason for that is the estimate
    \[\Vert\delta\pi^\rho\varphi_n\Vert_q^q\leq2^{q-2}\mathcal{Q}^+(\pi^\rho\varphi_n)\] for all $2\leq q<\infty.$ If $1<p\leq 2,$ then we apply this to the H\"older dual of $p.$ If $p = 1,$ then dominated convergence does the trick. For $p>2$ this argument does not work. It would be interesting to know if this is related to the existence of an $\ell^p$-Hodge decomposition or boundedness of Riesz-Transforms in $\ell^p.$
\end{remark}

\begin{remark}
    There are various criteria for recurrence of graphs.  For an exhaustive treatment of the probabilistic aspects, one of the standard text books is \cite{Woe00}. For a more analytic approach, we refer to \cite[Chapter~6]{KLW} or \cite{JP} and the references therein. A recent criterion that is particularly relevant for our setting is given in terms of intrinsic metrics on the graph, see \cite[Theorem~4.2]{LPS}. It states that a graph is recurrent if and only if it admits an intrinsic metric with finite metric balls. Furthermore, let us emphasize that the volume of the link with respect to     $m_\rho$  is finite. As a result, recurrence of the links is equivalent to stochastic completeness or form uniqueness, see e.g. \cite[Theorem 6.4]{Schmidt2017} or \cite[Theorem 7.1]{GHKLW}.
\end{remark}

The more subtle question whether $\partial\partial_N = 0$ is not characterized by Theorem~\ref{MainResult}, since in general $\partial\partial_N$ is only a restriction of $\partial\partial\mid_{\ell^2}$. However, we can characterize this under additional assumptions on $(\Sigma,m)$. The relevance of this question will be discussed in the next section. We call $(\Sigma,m)$   \textit{locally balanced} if for all $\rho\in\Sigma$   one has
\[\sup_{ {v,v'\in\lk,v\neq v',w\in\rho} }\frac{m(vv'\rho)}{m(vv'\rho\setminus w)}<\infty,\]
where $\sup\varnothing=0$ in the case of $\rho=\varnothing.$ 
We recall that in   \cite{BK} a global notion of balancedness was introduced, where it is said that $(\Sigma,m)$ is \textit{balanced} if 
\[\sup_{{\sigma,\tau\in\Sigma,}{\tau\prec\sigma}}\frac{m(\sigma)}{m(\tau)}<\infty,\]
which is stronger than the local balancedness. 
The local notion is sufficient for our purposes and is more natural in the context of links.
This notion gives rise to our second main result.

\begin{theorem} 
    Assume that $(\Sigma,m)$ is locally balanced. Then the following are equivalent:
\begin{itemize}
        \item [(i)]$\partial\partial_N = 0$.
        \item [(ii)] All connected components of the graph $b_\rho$ over $\lk$ are recurrent.
\end{itemize}
\end{theorem}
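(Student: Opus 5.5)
The statement quantifies over all $\rho\in\Sigma$, so I read (ii) as ``for all $\rho\in\Sigma$''. The plan is to reduce everything to Theorem~\ref{MainResult}.

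For (ii)$\Rightarrow$(i), take $\omega\in D(Q^-_N)$, so $\omega\in\ell^2$ and $\partial\omega\in\ell^2\subseteq D(\partial)$. Hence $\omega\in D(\partial\partial)\cap\ell^2$, and Theorem~\ref{MainResult} applied at every $\rho$ gives $\partial\partial\omega(\rho)=0$. Local balancedness is not needed in this direction.

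For (i)$\Rightarrow$(ii), I argue by contraposition. Fix $\rho$ such that $b_\rho$ has a transient component, and take the witness from the proof of Theorem~\ref{MainResult}: $u\in\mathcal D_\rho$ with $\mathcal L_\rho u\in\ell^1_\rho$ and $\sum m_\rho\mathcal L_\rho u\neq0$. Put $\omega=\delta\pi^\rho u$. By Lemma~\ref{locallemma}~(c), $\omega\in\ell^2$, and we already know $\partial\partial\omega(\rho)\neq0$. It remains to show $\omega\in D(Q^-_N)$, i.e.\ $\partial\omega=\partial\delta\pi^\rho u\in\ell^2$. There are two obstacles: $\mathcal L_\rho u$ is only known to be in $\ell^1_\rho$, and the values of $\partial\delta\pi^\rho u$ off the star of $\rho$ must be controlled. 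So I would first improve the witness. By \cite[Theorem~6.1]{KLW}, transience means there is a bounded nonconstant harmonic-type function, or equivalently a $u$ with $\mathcal L_\rho u$ finitely supported and nonzero total sum. Concretely, I take $u$ to be the equilibrium potential (Green function) $u=G1_{v_0}$ on the transient component, extended by $0$ elsewhere. Then $0\le u\le C$, $u\in\mathcal D_\rho$, $\mathcal L_\rho u=c1_{v_0}/m_\rho(v_0)$ is finitely supported, and $\sum m_\rho\mathcal L_\rho u=c\neq0$. With this choice, Lemma~\ref{locallemma}~(d) shows $\partial\delta\pi^\rho u$ is finitely supported on $\{\tau\succ\rho\}$, so it lies in $\ell^2$ there.

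The main step is the off-star part: the $\tau\succ\rho'$ with $\tau\not\supseteq\rho$. Such a $\tau$ has the form $\tau=vv'\rho\setminus w$ with $w\in\rho$, or meets $\rho$ even less; in the latter case $\delta\pi^\rho u$ vanishes on its cofaces, except on at most finitely many configurations. For $\tau=vv'\rho\setminus w$, the only coface carrying $\delta\pi^\rho u$ is $\sigma=vv'\rho$, so
\[
|\partial\delta\pi^\rho u(\tau)|=\frac{m(vv'\rho)}{m(vv'\rho\setminus w)}\,|\delta\pi^\rho u(vv'\rho)|\le C_\rho\,|u(v)-u(v')|,
\]
where $C_\rho$ is the local balancedness constant. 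Summing with weight $m(\tau)$ gives
\[
\sum_\tau m(\tau)|\partial\delta\pi^\rho u(\tau)|^2\le C_\rho\sum_{w\in\rho}\sum_{v,v'}m(vv'\rho)|u(v)-u(v')|^2\le C\,\mathcal Q_\rho(u)<\infty,
\]
using $m(\tau)\le m(vv'\rho)\cdot\frac{m(\tau)^2}{m(vv'\rho)^2}\cdot\frac{m(vv'\rho)}{m(\tau)}$. In words, one factor of the ratio cancels against the weight $m(\tau)$ and the other is bounded by $C_\rho$. This bookkeeping is exactly where local balancedness enters, and I expect it to be the delicate step. Remaining cases, such as $\tau$ differing from $\rho$-containing simplices in two vertices, contribute nothing, because then $\tau$ has no coface containing $\rho$. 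Hence $\partial\omega\in\ell^2$, so $\omega\in D(\partial_N)$ and $\partial\partial_N\omega\neq0$.
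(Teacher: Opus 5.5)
Your proposal is correct and follows the paper's proof essentially step for step. Direction (ii)$\Rightarrow$(i) comes directly from Theorem~\ref{MainResult}. For the converse you use a finite-energy monopole $u$ (your Green function) with $\mathcal{L}_\rho u$ a nonzero multiple of $1_{v_0}$ and the form $\omega=\delta\pi^\rho u$. The part $\tau\succ\rho$ is handled by Lemma~\ref{locallemma}~(d). In the remaining part only $\tau=vv'\rho\setminus w$ with its single relevant coface $vv'\rho$ contributes, and this is bounded by $(\dim(\rho)+1)\,C_\rho\,\mathcal{Q}_\rho(u)$ via local balancedness, exactly as in the paper.

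There are two cosmetic slips. First, ``bounded nonconstant harmonic-type function'' does not characterize transience, but you never use it; only the monopole matters. Second, your displayed ``inequality'' for $m(\tau)$ is a trivial identity. The actual estimate is $m(\tau)\vert\partial\omega(\tau)\vert^2=\frac{m(vv'\rho)}{m(\tau)}\,b_\rho(v,v')\vert u(v)-u(v')\vert^2\le C_\rho\, b_\rho(v,v')\vert u(v)-u(v')\vert^2$.
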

\begin{proof}
    The implication $(ii)\Rightarrow (i)$ follows directly from Theorem~\ref{MainResult}.
    Now assume $(ii)$ is not fulfilled. Then, for some $\rho\in\Sigma,$ the graph $b_\rho$ has a transient connected component.    
    According to \cite[Theorem~6.1~(x)]{KLW} we find a monopole in $\lk.$ More precisely, we find a $v_0\in\lk$ and $u\in\mathcal{D}_\rho$ such that $\mathcal{L}_\rho u = 1_{v_0}.$ Now Lemma~\ref{locallemma} tells us that $\delta\pi^\rho u\in\ell^2$ and $\partial\delta\pi^\rho u\in D(\partial),$ as well as \[\partial\partial\delta\pi^\rho u(\rho) = \partial\pi^\rho\mathcal{L}_\rho u(\rho) = \partial\pi^\rho1_{v_0}(\rho) = \frac{m(v_0\rho)}{m(\rho)}\neq 0.\]
    Let 
    $\omega = \delta\pi^\rho u.$ If $\partial\omega\in\ell^2\subseteq D(\partial),$ then $\omega\in D(\partial\partial_N)$, as well as     
    $\partial\partial_N\omega \neq0.$  We therefore show that this is indeed the case and check that $\partial\omega$ is square-summable over $\tau\succ \rho$ and $\tau\nsucc \rho$ separately.
    
    First, we observe with Lemma~\ref{locallemma}~(d) that 
        \[\sum_{\tau\succ\rho}m(\tau)\vert\partial\omega(\tau)\vert^2  
        = \sum_{\tau\succ\rho}m(\tau)\vert\partial\delta\pi^\rho u(\tau)\vert^2  
        =\sum_{\tau\succ\rho}m(\tau)\vert\pi^\rho\mathcal{L}_\rho u(\tau)\vert^2<\infty.\]
     The finiteness here follows because $\pi^\rho\mathcal{L}_\rho u = \pi^\rho 1_{v_0}\in C_c(\Sigma).$ For the remaining $\tau\nsucc \rho$, we proceed with the following calculation 
\begin{align*}
    \sum_{\tau\nsucc\rho}m(\tau)\vert\partial\omega(\tau)\vert^2 & = \sum_{\tau\nsucc\rho}m(\tau)\vert\partial\delta \pi^\rho u(\tau)\vert^2\\
    & = \sum_{\tau\nsucc\rho}\frac{1}{m(\tau)}\Big\vert\sum_{\sigma\succ\tau}m(\sigma)\theta(\tau,\sigma)\sum_{\tau'\prec\sigma,\tau'\succ\rho}\theta(\tau'\sigma)\theta(\rho,\tau') u(\tau'\backslash\rho)\Big\vert^2\end{align*}
 Note that there is at most one $\sigma=vv'\rho$ that satisfies $\rho\nprec\tau\prec\sigma$ and $\rho\subseteq\sigma,$ namely $\sigma = \rho\cup\tau,$ provided this is an actual element of $\Sigma.$ Furthermore, we denote by $\tau_1=v\rho$ and $\tau_2=v'\rho$ the two unique simplices that satisfy $\rho\prec\tau_1,\tau_2\prec\sigma.$ 
 Using this observation and the identity
\(\theta(\rho,\tau_1)\theta(\tau_1,\sigma)+\theta(\rho,\tau_2)\theta(\tau_2,\sigma) = \delta\delta 1_\rho(\sigma) = 0\),  we obtain
    \begin{align*}
    \ldots& = \sum_{\tau\nsucc\rho}\frac{1}{m(\tau)}\sum_{\sigma\supseteq\rho,\sigma\succ\tau}m^2(\sigma)\vert u(\tau_1\backslash\rho)-u(\tau_2\backslash\rho)\vert^2\\
    & = \frac{1}{2}\sum_{v,v'\in\lk}b_\rho(v,v')\vert u(v)-u(v')\vert^2\sum_{\tau\nsucc\rho,\tau\prec vv'\rho}\frac{m(vv'\rho)}{m(\tau)}\\
    &\leq \mathcal{Q}_\rho(u)(\dim(\rho)+1)
    \sup_{
    \tiny \rho\not\prec\tau\prec vv'\rho}\frac{m(vv'\rho)}{m(\tau)}
    <\infty.
\end{align*}
Finiteness in the last step follows because $u\in\mathcal{D}_\rho$ and local balancedness of $(\Sigma,m)$. We conclude that $\partial\omega\in\ell^2,$ which finishes the proof.
\end{proof}

\section{Applications}
In this section, we give some applications of the results in the previous sections. We first show that under the assumption that all links are recurrent and $\partial\delta C_c(\Sigma)\subseteq\ell^2,$ the Hodge Laplacian $\Delta^H$ associated to any closed quadratic form $Q^H_D\subseteq Q^H\subseteq Q^H_N$ acts as $\partial\delta+\delta\partial.$ Then, we show that under the assumption of recurrent links $\delta_D$ yields an ideal boundary condition on $(\Sigma,m),$ which we use to define relative cohomology classes and prove the respective weak Hodge decomposition. Afterwards, we show that if $ \partial\partial= 0$, then $\Delta^+_D$ has non-trivial kernel. Finally, we discuss the relationship with the transience criterion $(\mathrm{T}')$ of \cite{Parzanchevski}.

\subsection{Action of Hodge Laplacians}

Given any closed quadratic form $Q^H_D\subseteq Q\subseteq Q^H_N,$ we obtain by general theory a unique positive self-adjoint operator $\Delta^H$ such that for all $\omega\in D(\Delta^H)$, $\eta\in D(Q^H)$
\[Q^H(\omega,\eta) = \langle\Delta^H\omega,\eta\rangle.\]
In \cite[Theorem~3.4]{BK} it was shown that on $D(\Delta^H)$ 
\[\Delta^H = (\delta+\partial)^2.\]
It is a natural question whether $\Delta^H$ actually acts as $\delta\partial+\partial\delta$ in analogy to Hodge theory for manifolds. Since $\delta\partial+\partial\delta$ maps $\ell^2(\Sigma_k,m)$ into $C(\Sigma_k),$ this in particular means that $\Delta^H$ acts orthogonally on $\ell^2(\Sigma_k,m)$, $k\geq 0,$ i.e.,
\[\Delta^H = \bigoplus_k\Delta^H_k\] with operators $\Delta^H_k$ defined on $\ell^2(\Sigma_k,m).$ This is always true for $Q^H = Q^H_D$ (\cite[Theorem~3.4]{BK}) but may fail in general, in part due to the fact that $\partial\partial\neq 0.$ 
We address this question in the following theorem.
\begin{theorem}
    Let $\Delta^H$ be the Hodge Laplacian associated to a closed quadratic form $Q^H_D\subseteq Q^H\subseteq Q^H_N.$ Assume that for all $\rho\in \Sigma$ the connected components of $b_\rho$ are recurrent and $\partial\delta C_c(\Sigma)\subseteq\ell^2.$ Then,  on $D(\Delta^H)$ \[\Delta^H = \partial\delta+\delta\partial .\]
\end{theorem}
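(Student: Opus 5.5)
The plan is to start from the weak formulation of $\Delta^H$ and reduce everything to checking that the formal expansion of $(\delta+\partial)^2$ is legitimate; the point is that the sum $\partial(\delta\omega+\partial\omega)$ may only be split once we know $\delta\omega$ and $\partial\omega$ lie in $D(\partial)$ separately. Let $\omega\in D(\Delta^H)$ and set $\zeta=(\delta+\partial)\omega\in\ell^2$, which holds since $\omega\in D(Q^H)\subseteq D(Q^H_N)$. For $\tau\in\Sigma$ I test with $\eta=1_\tau\in C_c(\Sigma)\subseteq D(Q^H_D)\subseteq D(Q^H)$:
\[m(\tau)\Delta^H\omega(\tau)=\langle \zeta,\delta 1_\tau\rangle+\langle\zeta,\partial 1_\tau\rangle = m(\tau)\bigl(\partial\zeta(\tau)+\delta\zeta(\tau)\bigr).\]
Both pairings are absolutely convergent: the first because $\zeta\in\ell^2\subseteq D(\partial)$, the second because it is a finite sum. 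This recovers $\Delta^H=(\delta+\partial)^2$ as in \cite[Theorem~3.4]{BK}.

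The degree-raising part is harmless. Indeed $\delta\zeta=\delta\delta\omega+\delta\partial\omega=\delta\partial\omega$, since $\delta$ acts through finite sums and $\delta\delta=0$ by definition of a coboundary operator. For the boundary part, the key step, and the expected main obstacle, is to show that $\delta\omega\in D(\partial)$ for every $\omega\in\ell^2$. This is exactly where the hypothesis $\partial\delta C_c(\Sigma)\subseteq\ell^2$ enters.

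Fix $\tau\in\Sigma$. We must bound $\sum_{\sigma\succ\tau}m(\sigma)|\delta\omega(\sigma)|$, which is at most $\sum_{\tau'}|\omega(\tau')|\sum_{\sigma\succ\tau,\tau'}m(\sigma)$. I split off the term $\tau'=\tau$. That term equals $|\omega(\tau)|\sum_{\sigma\succ\tau}m(\sigma)<\infty$ by local summability. For $\tau'\neq\tau$ there is at most one $\sigma$, namely $\sigma=\tau\cup\tau'$, and for this $\sigma$ one has
\[|\partial\delta 1_\tau(\tau')|=\frac{m(\tau\cup\tau')}{m(\tau')}.\]
By Cauchy--Schwarz the remaining sum is therefore bounded by
\[\Vert\omega\Vert_2\Bigl(\sum_{\tau'\ne\tau}\frac{m(\tau\cup\tau')^2}{m(\tau')}\Bigr)^{1/2}\le\Vert\omega\Vert_2\,\Vert\partial\delta 1_\tau\Vert_2<\infty.\]
Thus $\delta\omega\in D(\partial)$. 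Since $D(\partial)$ is a vector space, $\partial\omega=\zeta-\delta\omega\in D(\partial)$ as well.

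Hence $\omega\in D(\partial\partial)\cap\ell^2$, and linearity gives $\partial\zeta=\partial\delta\omega+\partial\partial\omega$. Since all links have recurrent connected components, Theorem~\ref{MainResult} applies at every $\rho\in\Sigma$ and yields $\partial\partial\omega=0$. Combining the two parts gives
\[\Delta^H\omega=\partial\zeta+\delta\zeta=\partial\delta\omega+\delta\partial\omega,\]
which proves the claim.
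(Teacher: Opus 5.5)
Your proof is correct and follows the paper's route: expand $\Delta^H\omega=(\delta+\partial)^2\omega$, use $\partial\delta C_c(\Sigma)\subseteq\ell^2$ to get $\delta\omega\in D(\partial)$, deduce $\partial\omega\in D(\partial)$ by subtraction, and then kill $\partial\partial\omega$ with Theorem~\ref{MainResult}. The only difference is that you prove inline two facts the paper cites from \cite{BK}: the identity $\Delta^H=(\delta+\partial)^2$, which you obtain by testing against $1_\tau$, and the inclusion $\ell^2\subseteq D(\partial\delta)$, which you obtain from your Cauchy--Schwarz estimate against $\Vert\partial\delta 1_\tau\Vert_2$.
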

\begin{proof}
Let $\omega\in D(\Delta^H).$ The first thing to note is that  $(\delta+\partial)\omega\in\ell^2$   as $D(\Delta^H)\subseteq D(Q^H)$ and consequently $(\delta+\partial)\omega\in D(\partial)$ as $\ell^2\subseteq D(\partial)$  by  \cite[Lemma~3.1~(b)]{BK}. According to \cite[Theorem~3.4]{BK} the operator $\Delta^H$ acts as $(\delta+\partial)^2$ and so \[\Delta^H\omega = (\delta+\partial)^2\omega = \delta(\delta+\partial)\omega+\partial(\delta+\partial)\omega = \delta\partial\omega+\partial(\delta+\partial)\omega.\] 
We want to expand the parentheses in the term $\partial(\delta+\partial)\omega.$ However, it is not a priori clear that  $\delta\omega$ or $\partial\omega$ lie in $D( \partial).$ The assumption $\partial\delta C_c(\Sigma)\subseteq\ell^2$  implies  $\ell^2\subseteq D(\partial\delta),$ see \cite[Proposition~3.18 and Lemma~3.15~(b)]{BK}. Thus, $\delta\omega\in D(\partial)$ as $\omega\in D(\Delta^H) \subseteq \ell^2.$ From above we have $(\delta+\partial)\omega\in D(\partial)$ and since $D(\partial)$ is a vector space, we conclude that $\partial\omega\in D(\partial)$. Hence, we infer with $\partial\partial=0$ from Theorem~\ref{MainResult}
 \[\Delta^H\omega = \delta\partial\omega+\partial\delta\omega+\partial\partial\omega = \delta\partial\omega+\partial\delta\omega,\]
 which finishes the proof.
\end{proof}
\begin{remark}
    The condition $\partial\delta C_c(\Sigma)\subseteq\ell^2$ may seem abstract at first, but can be characterized explicitly in terms of $(\Sigma,m).$ More precisely, it holds iff for all $\tau\in\Sigma,$ \[\sum_{\sigma\succ\tau}\sum_{\tau'\prec\sigma,\tau'\neq\tau}\frac{m(\sigma)^2}{m(\tau')}<\infty,\] see \cite[Proposition~3.18]{BK}. It is, for example, satisfied if $(\Sigma,m)$ is balanced.
\end{remark}

\subsection{Hilbert complexes with ideal boundary conditions}
In this section we show that under the assumption that all links are recurrent, the Dirichlet boundary conditions imposed on $\delta$ yield an ideal boundary condition on $(\Sigma,m).$ This is relevant as it allows to apply the general theory of Hilbert complexes to study the $\ell^2$-cohomology of $(\Sigma,m)$ with respect to different boundary conditions. In particular, it allows to prove the Hodge decomposition and the associated Hodge theorem, which generalize the result in \cite[Theorem~3.11]{BK}.\\
To this end, we recall the definition of a Hilbert complex and refer for further discussion  to \cite{BL} and \cite[Section~3.3]{BK}. Given a sequence of mutually orthogonal Hilbert spaces $H_k$, $k\in\mathbb{Z},$ and closed, densely defined operators $d_k\colon  D(d_k)\subseteq H_k\rightarrow H_{k+1}$ with $\mathrm{range}(d_k)\subseteq D(d_{k+1})$ and $d_{k+1}d_{k}= 0,$ a \textit{Hilbert complex} is a chain complex

\[\ldots\xrightarrow{d_{k-2}} D(d_{k-1})\xrightarrow{d_{k-1}}D(d_k)\xrightarrow{d_k}D(d_{k+1})\xrightarrow{d_{k+1}}\ldots.\]
In analogy to elliptic complexes on Riemannian manifolds \cite{BL} we call any closed restriction $\delta_D\subseteq\delta_I\subseteq\delta_N,$ that satisfies \[\mathrm{range}(\delta_{I})\subseteq D(\delta_{I})\]
 an \textit{ideal boundary condition} on $(\Sigma,m).$ That way, we obtain a Hilbert complex with $d_k = \delta_{I,k}$ and $H_k = \ell^2(\Sigma_k,m)$ for all $k\geq 0$ and $H_k = \lbrace 0\rbrace$, $d_k = 0$ when $k<0,$ where $\delta_{I,k}$ is the restriction of $\delta_I$ to $\ell^2(\Sigma_k,m).$\\
 In \cite[Section~3.3]{BK} it was shown that $\delta_I = \delta_N$ is always a valid choice for an ideal boundary condition and yields the (reduced) $\ell^2$-cohomology 
 $$H_{abs}^k(\Sigma) = \mathrm{ker}(\delta_{N,k})\slash\overline{\mathrm{range}(\delta_{N,k-1})}.$$ 
 Here, $\delta_N$ yields the analogue of \textit{absolute boundary conditions} for elliptic complexes as in \cite{BL}. One is also interested in other boundary conditions and their respective cohomologies. In particular, it is natural to ask whether $\delta_D$ yields an ideal boundary condition as well, which may be interpreted as analogue of \textit{relative boundary conditions} on elliptic complexes. However, the crucial condition \[\mathrm{range}(\delta_D)\subseteq D(\delta_D)\] fails to hold in general. It turns out that this is equivalent to $\partial\partial_N = 0.$ Note that in that case one has $\partial\partial_N = \partial_N\partial_N.$

 \begin{theorem}\label{ideal}
     The following assertions are equivalent:
     \begin{itemize}
         \item[(i)] $\delta_D$ is an ideal boundary condition.
         \item[(ii)] $\partial\partial_N = 0$.
     \end{itemize}
      If, for all $\rho\in\Sigma,$ all connected components of $b_\rho$ over $\lk$ are recurrent, then $\delta_D$ is an ideal boundary condition.
 \end{theorem}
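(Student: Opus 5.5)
The plan is to use the adjoint relations $\delta_D^* = \partial_N$ and $\partial_N^*=\delta_D$, which hold because $\delta_D$ and $\partial_N$ are mutually adjoint closed operators (cf.\ \cite[Corollary~3.8]{BK} and the discussion preceding it). The final assertion will then follow from the equivalence together with the preceding theorem (or directly from Theorem~\ref{MainResult}). That theorem already shows that recurrence of all links implies $\partial\partial\omega=0$ for every $\omega\in D(\partial\partial)\cap\ell^2$, and hence in particular $\partial\partial_N=0$.

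For (i)$\Rightarrow$(ii), let $\omega\in D(\partial\partial_N)$. This means $\omega\in D(\partial_N)$ and $\partial\omega\in D(\partial)$, with $\partial\omega\in\ell^2$. Fix $\rho\in\Sigma$ and test against $1_\rho\in C_c(\Sigma)\subseteq D(\delta_D)$. By (i) we have $\delta 1_\rho\in D(\delta_D)$, and $\delta\delta1_\rho=0$. Using the adjointness twice, I would compute
\[
m(\rho)\,\partial\partial\omega(\rho)=\langle \partial\partial\omega,1_\rho\rangle .
\]
I first need to justify that this pairing equals $\langle\partial\omega,\delta1_\rho\rangle$. This is a finite Stokes-type identity, \cite[Theorem~2.9]{BK}, which only requires $\partial\omega\in D(\partial)$ and that $1_\rho$ has compact support. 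Next, $\langle\partial\omega,\delta1_\rho\rangle=\langle\partial_N\omega,\delta1_\rho\rangle$, and since $\delta1_\rho\in D(\delta_D)=D(\partial_N^*)$, this equals $\langle\omega,\delta_D\delta1_\rho\rangle=0$. Hence $\partial\partial_N\omega=0$.

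For (ii)$\Rightarrow$(i), let $\eta=\delta_D\omega$ with $\omega\in D(\delta_D)$. We must show $\eta\in D(\delta_D)=D(\partial_N^*)$, and that $\delta\eta=0$, which is automatic since $\delta\delta=0$. So I have to show that $\xi\mapsto\langle\partial_N\xi,\eta\rangle$ is bounded on $D(\partial_N)$, indeed that it vanishes. Write $\langle\partial_N\xi,\delta_D\omega\rangle$.

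The key step, and the main obstacle, is to show $\partial_N\xi\in D(\partial_N)$, i.e.\ $\partial\partial_N\xi\in\ell^2$, so that we may move $\delta_D$ across. By (ii), $\partial\partial_N\xi=0$ whenever $\partial_N\xi\in D(\partial)$, and $\partial_N\xi\in\ell^2\subseteq D(\partial)$ always. Therefore $\partial\partial_N\xi$ is defined and equals $0$, and so $\partial_N\xi\in D(Q^-_N)=D(\partial_N)$. Then
\[
\langle\partial_N\xi,\delta_D\omega\rangle=\langle\partial_N\partial_N\xi,\omega\rangle=0,
\]
which shows $\eta\in D(\partial_N^*)=D(\delta_D)$ with $\delta_D\eta=0$. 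Hence $\mathrm{range}(\delta_D)\subseteq D(\delta_D)$. The care needed lies in confirming that $\partial_N^*=\delta_D$ is precisely the Dirichlet operator (the closure from $C_c$), and not a larger extension. I would cite this adjointness from \cite{BK}.
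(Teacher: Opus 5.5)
Your proof is correct and follows the paper's argument. The direction (i)$\Rightarrow$(ii) is identical: Stokes against $1_\rho$, then $\delta 1_\rho\in D(\partial_N^*)$. The final assertion is also deduced from Theorem~\ref{MainResult} in the same way. In (ii)$\Rightarrow$(i) you note that (ii) forces $\partial_N\xi\in D(\partial_N)$ and then move $\delta_D$ across via $\delta_D^*=\partial_N$. The paper instead applies Stokes' theorem to a $C_c$-sequence approximating $\omega$ in the graph norm of $\delta$, and this is only a cosmetic difference, since that is how the adjoint relation itself is proved.
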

\begin{proof}
    Note that $\delta_D$ and $\partial_N$ are adjoints of each other, i.e., $\delta_D = \partial_N^\ast$ and $\partial_N = \delta^\ast_D$, see \cite[Lemma~3.7]{BK}.

   Now assume $(i)$ and let $\omega\in D(\partial_N)$. Then for any $\rho\in\Sigma, $ \[m(\rho)\partial\partial_N\omega(\rho) ) = \langle\partial_N\omega,\delta_D1_\rho\rangle = \langle\omega,\delta_D\delta_D1_\rho\rangle = 0,\] where we used Stokes' theorem, \cite[Theorem~2.9]{BK} for the first equation (recall that $\partial_N\omega\in\ell^2\subseteq D(\partial)$) and $\delta_D1_\rho\in D(\delta_D) = D(\partial_N^*),$ which is due to $(i)$.\\
   On the other hand, if $(ii)$ holds, then for $\eta\in D(\delta_D)$ and $\omega = \delta_D\eta,$ we get for all $f\in D(\partial_N)$
   \[\langle\omega,\partial_Nf\rangle = \langle\delta_D\eta,\partial_N f\rangle = \langle\eta,\partial\partial_N f\rangle = 0 = \langle0,f\rangle.\] Here we used again Stokes' theorem, \cite[Theorem~2.9]{BK} for the second equation, together with the fact that $\lim_n\Vert\eta-\eta_n\Vert_2+\Vert\delta\eta-\delta\eta_n\Vert_2 = 0$ for some sequence $\eta_n\in C_c(\Sigma).$
   In particular $\omega\in D(\partial_N^*) = D(\delta_D),$ which shows $(i)$.

   The second statement  follows directly from the equivalence and Theorem~\ref{MainResult}.
\end{proof}

The theorem allows to define the relative (reduced) cohomology class
$$H^k_{rel}(\Sigma) = \mathrm{ker}(\delta_{D,k})\slash\overline{\mathrm{range}(\delta_{D,k-1})}.$$ 
The terminology ``relative'' comes from the relative boundary conditions. For manifolds, this also means relative to the topological boundary of the manifold. It would be interesting to identify a corresponding analogue of the topological boundary for simplicial complexes. We end this section with the following two corollaries, of which the first is a generalization of \cite[Theorem~3.11]{BK}.

\begin{corollary}[Weak Hodge decomposition and Hodge theorem]
    If for all $\rho\in\Sigma$, all connected components of $b_\rho$ over $\lk$ are recurrent, then for $\Delta= \delta_D\partial_N+\partial_N\delta_D$, 
    	$$\ell^2 = \ker\Delta \oplus\overline{\mathrm{range}(\delta_D)}\oplus\overline{\mathrm{range}(\partial_N)}$$
	and for all $k\geq 0$ and $\Delta_k$ the restriction of $\Delta$ to $\ell^2(\Sigma_k,m),$
	$$\ker(\delta_{D,k})\slash \overline{\mathrm{range}(\delta_{D,k-1})}\cong \ker\Delta_{k}$$ 
\end{corollary}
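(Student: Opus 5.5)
By Theorem~\ref{ideal}, recurrence of all links makes $\delta_D$ an ideal boundary condition. So we obtain a Hilbert complex $(\ell^2(\Sigma_k,m),\delta_{D,k})$ with closed, densely defined differentials satisfying $\delta_D\delta_D=0$. The plan is to deduce both assertions from the abstract Hodge theory for Hilbert complexes of Br\"uning--Lesch \cite{BL}, applied to this complex. The input needed is the adjoint relation $\delta_D^*=\partial_N$ and $\partial_N^*=\delta_D$ from \cite[Lemma~3.7]{BK}, already used in the proof of Theorem~\ref{ideal}.

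First we establish the orthogonal decomposition directly.
\begin{itemize}
\item Since $\mathrm{range}(\delta_D)\subseteq\ker\delta_D$, the adjoint relation gives $\overline{\mathrm{range}(\partial_N)}=(\ker\delta_D)^\perp$.
\item Hence $\overline{\mathrm{range}(\delta_D)}\perp\overline{\mathrm{range}(\partial_N)}$.
\item The orthogonal complement of their sum is $\ker\partial_N\cap\ker\delta_D$.
\end{itemize}
It remains to identify $\ker\delta_D\cap\ker\partial_N$ with $\ker\Delta$. We interpret $\Delta=\delta_D\partial_N+\partial_N\delta_D$ as the self-adjoint operator associated with the closed form $\omega\mapsto\|\delta_D\omega\|^2+\|\partial_N\omega\|^2$ on $D(\delta_D)\cap D(\partial_N)$. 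The complex property ($\delta_D\delta_D=0$, equivalently $\partial_N\partial_N=0$ by (ii) of Theorem~\ref{ideal}) ensures that this operator acts as the stated sum on its domain, exactly as in \cite[Section~3.3]{BK} and \cite{BL}. If $\Delta\omega=0$, then $0=\langle\Delta\omega,\omega\rangle=\|\delta_D\omega\|^2+\|\partial_N\omega\|^2$. The converse inclusion is immediate.

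Second, the Hodge theorem follows by restricting to degree $k$. We have $\ker\delta_{D,k}=\ker\Delta_k\oplus\overline{\mathrm{range}(\delta_{D,k-1})}$, because $\ker\delta_{D,k}\perp\overline{\mathrm{range}(\partial_N)}$ and the decomposition respects degrees: $\delta_D$ raises degree by one and $\partial_N$ lowers it. Taking the quotient gives the isomorphism $\ker(\delta_{D,k})/\overline{\mathrm{range}(\delta_{D,k-1})}\cong\ker\Delta_k$.

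The main point requiring care is the identification of $\Delta$ as a well-defined self-adjoint operator that splits into the degree pieces $\Delta_k$ with $\ker\Delta=\bigoplus_k\ker\Delta_k$. The plan is to take the form definition and use the fact that the form domain decomposes orthogonally over degrees, since both $\delta_D$ and $\partial_N$ are homogeneous. This is precisely where recurrence of the links enters, through $\partial_N\partial_N=0$, ruling out cross terms between degrees $k$ and $k-2$.
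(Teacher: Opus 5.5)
Your proposal is correct and is essentially the paper's proof: Theorem~\ref{ideal} makes $(\ell^2(\Sigma_k,m),\delta_{D,k})$ a Hilbert complex with $\delta_D^*=\partial_N$, and you then write out the Br\"uning--Lesch argument that the paper simply delegates to \cite[Theorem~3.11]{BK}. One correction to your last paragraph: $\delta_D\partial_N+\partial_N\delta_D$ and the form $\|\delta_D\omega\|^2+\|\partial_N\omega\|^2$ respect the grading without any hypothesis, so recurrence is not what splits $\Delta$ into the $\Delta_k$. Recurrence enters where you used $\mathrm{range}(\delta_D)\subseteq\ker\delta_D$ to get $\overline{\mathrm{range}(\delta_D)}\perp\overline{\mathrm{range}(\partial_N)}$, and again when you identify the form operator with the operator sum.
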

\begin{proof}
    Under the given assumptions, we have $\mathrm{range}(\partial_N)\subseteq D(\partial_N),$ which follows from Theorem~\ref{ideal}. The proof follows along the lines of  \cite[Theorem~3.11]{BK}.
\end{proof}

\begin{corollary}\label{harmonic}
     If $\rho\in\Sigma$ and all connected components of $b_\rho$ are recurrent, then \[\delta_D1_\rho\in D(\delta_D).\] In particular, $\delta_D1_\rho$ is a harmonic eigenfunction of $\Delta^+_D$ whenever $\rho$ has a coface.
\end{corollary}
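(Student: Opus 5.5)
The approach is to show that $\delta_D 1_\rho$ lies in $D(\partial_N^\ast)$. By \cite[Lemma~3.7]{BK} we have $\partial_N^\ast = \delta_D$, so this suffices, and then we identify the image.

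First, $1_\rho\in C_c(\Sigma)\subseteq D(Q^+_D)$, so $\omega:=\delta_D1_\rho=\delta 1_\rho\in\ell^2$. In fact $\omega$ is supported on the cofaces $\tau\succ\rho$, with $\omega(\tau)=\theta(\rho,\tau)$. Thus $\omega=\pi^\rho 1$ with $1\in\ell^2_\rho$, because $m_\rho(\lk)<\infty$.

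Next, take $f\in D(\partial_N)$. We have $\partial_N f\in\ell^2\subseteq D(\partial)$ and $f\in D(\partial\partial)\cap\ell^2$. Using Stokes' theorem \cite[Theorem~2.9]{BK} with the test form $1_\rho\in C_c(\Sigma)$, exactly as in the proof of Theorem~\ref{ideal}, we get
\[\langle\omega,\partial_N f\rangle=\langle\delta 1_\rho,\partial_N f\rangle=\overline{m(\rho)\,\partial\partial f(\rho)}.\]
By Theorem~\ref{MainResult}, the recurrence of all components of $b_\rho$ gives $\partial\partial f(\rho)=0$. Here we use only the implication (ii)$\Rightarrow$(i) at the single simplex $\rho$, so no global recurrence assumption is needed. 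Hence $\langle\omega,\partial_N f\rangle=0=\langle 0,f\rangle$ for all $f\in D(\partial_N)$. It follows that $\omega\in D(\partial_N^\ast)=D(\delta_D)$ with $\delta_D\omega=0$.

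For the second claim, $\omega\in D(\delta_D)$ and $\delta_D\omega=0$, so $\omega$ lies in the form domain of $Q^+_D$ with $Q^+_D(\omega)=0$. A form-domain element with zero energy satisfies $Q^+_D(\omega,\eta)=0=\langle 0,\eta\rangle$ for all $\eta$. Therefore $\omega\in D(\Delta^+_D)$ and $\Delta^+_D\omega=0$. Alternatively, use $\Delta^+_D=\partial_N\delta_D$, which gives $\partial_N\delta_D\omega=\partial_N 0=0$. Finally, $\omega\neq0$ exactly when $\rho$ has a coface, because $\omega(\tau)=\pm1$ for each $\tau\succ\rho$. So $\omega$ is a genuine eigenfunction for the eigenvalue $0$, i.e.\ it is harmonic.

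The main obstacle is the Stokes step. We must justify pairing $\delta_D 1_\rho$ against $\partial_N f$ and rewriting the result as the pointwise value $\partial\partial f(\rho)$. This needs $\partial_N f\in D(\partial)$, which holds because $\ell^2\subseteq D(\partial)$, and $1_\rho$ finitely supported. Alternatively, Lemma~\ref{locallemma}~(a) gives $m(\rho)\partial(\partial f)(\rho)=\sum_{\lk}m_\rho\pi_\rho\partial f$, which equals $\langle\partial f,\pi^\rho 1\rangle$ directly.
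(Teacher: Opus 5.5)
Your proof is correct and follows the paper's route: the paper obtains $\delta_D1_\rho\in D(\delta_D)$ from Theorem~\ref{MainResult} combined with the adjointness argument of Theorem~\ref{ideal}, and gets harmonicity from $\Delta^+_D\delta_D=\partial_N\delta_D\delta_D=0$. Your version is slightly more careful. Theorem~\ref{ideal} as stated assumes recurrence at every simplex, and you make explicit what the paper's brief citation leaves implicit: only the implication (ii)$\Rightarrow$(i) of Theorem~\ref{MainResult} at the single simplex $\rho$ is needed, tested against the one element $\delta 1_\rho$.
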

\begin{proof} The statement follows directly from Theorem~\ref{MainResult} and Theorem~\ref{ideal}. The fact that $\delta_D1_\rho$ is a harmonic eigenfunction follows from the identity $\Delta^+_D\delta_D = \partial_N\delta_D\delta_D = 0.$ Note that $\delta_D1_\rho\neq0$ iff $\rho$ has a coface.
\end{proof}

\begin{remark} Local summability implies that $\delta C_c(\Sigma)\subseteq\ell^2.$ Thus, one has for all $\rho\in\Sigma$ that $1_\rho\in D(\delta_N\delta_N)$ and it trivially follows that $\delta_N1_\rho$ is a harmonic eigenfunction of $\Delta^+_N,$ provided that $\rho$ has a coface. Moreover, one has $\partial C_c(\Sigma)\subseteq C_c(\Sigma)$ and $\partial\partial = 0$ on $C_c(\Sigma).$ From this, one easily deduces that $\partial1_\rho$ is a harmonic eigenfunction of both $\Delta_N^-$ and $\Delta_D^-,$ provided that $\rho$ has faces in $\Sigma,$ which is true if $\dim(\rho)\geq 1.$ However, for $\Delta^+_D$ this question is non-trivial, precisely because $\delta_D1_\rho$ may not be an element of $D(\delta_D).$ On the level of links one can understand the existence of such a harmonic eigenfunction as follows. Suppose that for some $\rho\in\Sigma$ all connected components of $b_\rho$ are recurrent. Then, by local summability, it follows that $m_\rho(\lk)<\infty.$ In that situation recurrence is equivalent to the fact that $\lambda = 0$ is an eigenvalue of the Dirichlet-Laplacian $L^D_\rho$ associated to the graph $b_\rho$, \cite[Theorem~6.1]{KLW}, with constant eigenfunction $1 = \pi_\rho\delta1_\rho.$
\end{remark}

\subsection{Connection to random walks on simplicial complexes}

It is a conceptual challenge to associate a random walk to a simplicial complex. A structural reason is that the quadratic forms of the Laplacians are in general no Dirichlet forms beyond the case of graphs. In \cite{Parzanchevski}, Parzanchevski and Rosenthal propose various notions what it means for a simplicial complex to be transient. They further show that these notions are not equivalent in general as they are for  the case of graphs. One of these proposed notions is the following property for $d$-dimensional simplicial complexes $\Sigma$:
\begin{itemize}
    \item [($\mathrm T'$)] For every $(d-1)$-dimensional simplex $\sigma\in\Sigma$, there is an $\omega\in\ell^2(\Sigma,m)$ supported in $\Sigma_d$   such that $$\partial  \omega = 1_{\sigma}.$$
\end{itemize}

We relate our result to the property ($\mathrm T'$) in the following way.

\begin{theorem}
    Let $\Sigma$ be a $d$-dimensional simplicial complex. 
    \begin{itemize}
        \item [(a)] If $\mathrm {(T')}$ holds, then the link graph $b_\rho$ of every $(d-2)$-dimensional simplex $\rho$ has a transient connected component.
        \item [(b)] If a connected component of the link graph of some simplex $\rho\in\Sigma$ is transient, then for every vertex $v$ in this component there is an $\omega\in\ell^2(\Sigma,m)$ such that for all $\tau\succ\rho,$
        $$\partial \omega (\tau)= 1_{v\rho}(\tau).$$ 
    \end{itemize}
\end{theorem}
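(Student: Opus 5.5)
For (a) I will argue by contraposition via Theorem~\ref{MainResult}. Suppose $(\mathrm T')$ holds and let $\rho$ be a $(d-2)$-simplex; assume for contradiction that all connected components of $b_\rho$ are recurrent. We may assume $\rho$ has a coface $\tau=v\rho$, which is $(d-1)$-dimensional. Then $(\mathrm T')$ gives $\omega\in\ell^2$, supported in $\Sigma_d$, with $\partial\omega=1_\tau$. Since $1_\tau\in C_c(\Sigma)\subseteq D(\partial)$, we have $\omega\in D(\partial\partial)\cap\ell^2$. Theorem~\ref{MainResult} then yields $\partial\partial\omega(\rho)=0$. On the other hand,
\[\partial\partial\omega(\rho)=\partial 1_\tau(\rho)=\frac{m(\tau)}{m(\rho)}\theta(\rho,\tau)\neq0,\]
a contradiction. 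The only points to check are that $\rho\in\Sigma$ and that it has a coface. If $d=1$ then $\rho=\varnothing$, and $(\mathrm T')$ still only concerns simplices in $\Sigma$. I would add a remark that the statement is to be read for those $\rho\in\Sigma$ that have cofaces, since otherwise the link is empty and the claim is vacuous or false.

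For (b) I will reuse the monopole construction from the proof of the second main theorem. Fix $\rho$ and a transient component $X_k$ of $b_\rho$, and take $v\in X_k$. By \cite[Theorem~6.1]{KLW}, applied to the component, there is $u\in\mathcal D_\rho$ supported in $X_k$ with $\mathcal L_\rho u=1_v$; extending by zero keeps $\mathcal D_\rho$ and the equation. Set
\[\omega=\frac{m_\rho(v)}{\theta(\rho,v\rho)\,m(v\rho)}\,\delta\pi^\rho u\cdot c,\]
where $c$ is a normalising constant, and more simply take $\omega=\theta(\rho,v\rho)\,\delta\pi^\rho u$. By Lemma~\ref{locallemma}~(c), $\omega\in\ell^2$. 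By Lemma~\ref{locallemma}~(d), for every $\tau\succ\rho$,
\[\partial\omega(\tau)=\theta(\rho,v\rho)\,\pi^\rho\mathcal L_\rho u(\tau)=\theta(\rho,v\rho)\,1_{\tau\succ\rho}\,1_v(\tau\setminus\rho)\,\theta(\rho,\tau)=1_{v\rho}(\tau),\]
using $\theta^2=1$.

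The main obstacle is the existence of the monopole with $\mathcal L_\rho u=1_v$ exactly, rather than merely $\mathcal L_\rho u\in\ell^1_\rho$. I would cite the monopole characterisation of transience, \cite[Theorem~6.1~(x)]{KLW}, as in the preceding proof, applied to the transient component and extended by zero. One must also check that this extension does not disturb $\mathcal L_\rho u$ outside the component; this holds because $b_\rho$ has no edges between components.
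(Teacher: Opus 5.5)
Your proof is correct and is essentially the paper's own argument. For (a) you apply Theorem~\ref{MainResult} to the $(\mathrm T')$-solution $\omega$, using $\partial\partial\omega(\rho)=\partial 1_\tau(\rho)=\frac{m(\tau)}{m(\rho)}\theta(\rho,\tau)\neq 0$; for (b) you take the monopole $u$ with $\mathcal{L}_\rho u=1_v$, set $\omega=\theta(\rho,v\rho)\,\delta\pi^\rho u$, and use Lemma~\ref{locallemma}~(c) and (d).

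Your extra remarks make explicit what the paper leaves implicit: $\rho$ must have a coface for (a) to make sense, and a monopole on one component extends by zero. You should delete the stray first formula with the constant $c$; it is redundant, since $m_\rho(v)=m(v\rho)$ and $\theta^{-1}=\theta$.
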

\begin{proof}
  \emph{(a) } If for $\sigma\in\Sigma$, there is an $\omega\in\ell^2(\Sigma,m)$  such that $\partial  \omega = 1_{\sigma},$ then $\omega\in D(\partial\partial)\cap\ell^2$ and for $\tau\prec\sigma$ 
    \[\partial\partial \omega(\tau)= \partial 1_\sigma(\tau) = \frac{m(\sigma)}{m(\tau)}\theta(\tau,\sigma)\neq 0.\]
    Hence, the link of any such $\tau\prec \sigma$ has a transient connected component by Theorem~\ref{MainResult}.

 \emph{(b)} Suppose that there is a $\rho\in\Sigma$ and assume that $b_\rho$ has a transient connected component.  Then \cite[Theorem~6.1]{KLW} guaranties the existence of a \textit{monopole} for each vertex, i.e., for all  $v$ in that component there is a $u\in\mathcal{D}_\rho$ such that \[\mathcal{L}_\rho u = 1_v.\] Then Lemma~\ref{locallemma} tells us that for $\tau\succ\rho,$
\[\partial\delta\pi^\rho u      (\tau)= \pi^\rho\mathcal{L}_\rho u(\tau) = \pi^\rho1_v(\tau) = \theta(\rho,v\rho)1_v(\tau\backslash\rho).\] Therefore, we see that $\omega = \theta(\rho,v\rho)\delta\pi^\rho u$ satisfies 
 $$\partial \omega = 1_{v\rho}$$ on $\lbrace\tau\succ\rho\rbrace$ and Lemma~\ref{locallemma} gives $\omega\in \ell^2$ as $u\in\mathcal{D}_\rho$.
 \end{proof}

\begin{remark}
    Note that 
 \emph{(b)} implies ($\mathrm T'$) only in a  \textit{local} sense.  Specifically, one cannot ensure that $\partial\omega(\tau) = 0$ for $\tau\nsucc\rho.$ 
    From the construction of $\omega$ in the proof, one sees that this cannot be expected as the monopole $u$ is only supported in the link of $\rho$ and, therefore, one does not have any control over other values of $\partial\omega$.
\end{remark}

{\bf Acknowledgements.} The authors appreciate the financial support of the DFG  and MK thanks the IIAS for the hospitality.
\bibliographystyle{abbrv}
\bibliography{literature}

@article{Schmidt2017,
  author    = {Schmidt, Marcel},
  title     = {Global properties of Dirichlet forms on discrete spaces},
  journal   = {Dissertationes Mathematicae (Rozprawy Matematyczne)},
  year      = {2017},
  volume    = {522},
  pages     = {1--43},
  doi       = {10.4303/dm.522}
}

@book {Soardi,
    AUTHOR = {Soardi, Paolo M.},
     TITLE = {Potential theory on infinite networks},
    SERIES = {Lecture Notes in Mathematics},
    VOLUME = {1590},
 PUBLISHER = {Springer-Verlag, Berlin},
      YEAR = {1994},
     PAGES = {viii+187},
      ISBN = {3-540-58448-X},
   MRCLASS = {31C20 (05C90 60J45 94C05)},
  MRNUMBER = {1324344},
MRREVIEWER = {Maretsugu\ Yamasaki},
       DOI = {10.1007/BFb0073995},
       URL = {https://doi.org/10.1007/BFb0073995},
}

@article {LPS,
    AUTHOR = {Lenz, Daniel and Puchert, Simon and Schmidt, Marcel},
     TITLE = {Recurrent and (strongly) resolvable graphs},
   JOURNAL = {J. Math. Pures Appl. (9)},
  FJOURNAL = {Journal de Math\'ematiques Pures et Appliqu\'ees. Neuvi\`eme
              S\'erie},
    VOLUME = {186},
      YEAR = {2024},
     PAGES = {1--30},
      ISSN = {0021-7824,1776-3371},
   MRCLASS = {05C63 (31C05 31C20)},
  MRNUMBER = {4745499},
       DOI = {10.1016/j.matpur.2024.04.002},
       URL = {https://doi.org/10.1016/j.matpur.2024.04.002},
}

@article {Garland,
    AUTHOR = {Garland, Howard},
     TITLE = {{$p$}-adic curvature and the cohomology of discrete subgroups
              of {$p$}-adic groups},
   JOURNAL = {Ann. of Math. (2)},
  FJOURNAL = {Annals of Mathematics. Second Series},
    VOLUME = {97},
      YEAR = {1973},
     PAGES = {375--423},
      ISSN = {0003-486X},
   MRCLASS = {20J05},
  MRNUMBER = {320180},
MRREVIEWER = {M.\ S.\ Raghunathan},
       DOI = {10.2307/1970829},
       URL = {https://doi.org/10.2307/1970829},
}

@article {GHKLW,
    AUTHOR = {Georgakopoulos, Agelos and Haeseler, Sebastian and Keller,
              Matthias and Lenz, Daniel and Wojciechowski, Rados\l aw K.},
     TITLE = {Graphs of finite measure},
   JOURNAL = {J. Math. Pures Appl. (9)},
  FJOURNAL = {Journal de Math\'ematiques Pures et Appliqu\'ees. Neuvi\`eme
              S\'erie},
    VOLUME = {103},
      YEAR = {2015},
    NUMBER = {5},
     PAGES = {1093--1131},
      ISSN = {0021-7824,1776-3371},
   MRCLASS = {31E05 (05C63 47B39)},
  MRNUMBER = {3333051},
MRREVIEWER = {Peter\ I.\ Kogut},
       DOI = {10.1016/j.matpur.2014.10.006},
       URL = {https://doi.org/10.1016/j.matpur.2014.10.006},
}

@book {Woe00,
	AUTHOR = {Woess, Wolfgang},
	TITLE = {Random walks on infinite graphs and groups},
	SERIES = {Cambridge Tracts in Mathematics},
	VOLUME = {138},
	PUBLISHER = {Cambridge University Press, Cambridge},
	YEAR = {2000},
	PAGES = {xii+334},
	ISBN = {0-521-55292-3},
	MRCLASS = {60B15 (60G50 60J10)},
	MRNUMBER = {1743100},
	MRREVIEWER = {Donald\ I.\ Cartwright},
	DOI = {10.1017/CBO9780511470967},
	URL = {https://doi.org/10.1017/CBO9780511470967},
}

@article {AharoniBergerMeshulam,
    AUTHOR = {Aharoni, R. and Berger, E. and Meshulam, R.},
     TITLE = {Eigenvalues and homology of flag complexes and vector
              representations of graphs},
   JOURNAL = {Geom. Funct. Anal.},
  FJOURNAL = {Geometric and Functional Analysis},
    VOLUME = {15},
      YEAR = {2005},
    NUMBER = {3},
     PAGES = {555--566},
      ISSN = {1016-443X,1420-8970},
   MRCLASS = {05C69 (05C65)},
  MRNUMBER = {2221142},
MRREVIEWER = {Nigel\ Martin},
       DOI = {10.1007/s00039-005-0516-9},
       URL = {https://doi.org/10.1007/s00039-005-0516-9},
}

@article{BK,
  title={{O}n {H}odge {L}aplacians on {G}eneral {S}implicial {C}omplexes},
  author={Bartmann, Philipp and Keller, Matthias},
  journal={arXiv preprint arXiv:2508.07761},
  year={2025}
}

@article {BL,
    AUTHOR = {Br\"uning, J. and Lesch, M.},
     TITLE = {Hilbert complexes},
   JOURNAL = {J. Funct. Anal.},
  FJOURNAL = {Journal of Functional Analysis},
    VOLUME = {108},
      YEAR = {1992},
    NUMBER = {1},
     PAGES = {88--132},
      ISSN = {0022-1236,1096-0783},
   MRCLASS = {58G05 (46M20 47A53 58A12 58G20)},
  MRNUMBER = {1174159},
MRREVIEWER = {J\"urgen\ Eichhorn},
       DOI = {10.1016/0022-1236(92)90147-B},
       URL = {https://doi.org/10.1016/0022-1236(92)90147-B},
}

@article {F,
    AUTHOR = {Forman, Robin},
     TITLE = {Bochner's method for cell complexes and combinatorial {R}icci
              curvature},
   JOURNAL = {Discrete Comput. Geom.},
  FJOURNAL = {Discrete \& Computational Geometry. An International Journal
              of Mathematics and Computer Science},
    VOLUME = {29},
      YEAR = {2003},
    NUMBER = {3},
     PAGES = {323--374},
      ISSN = {0179-5376,1432-0444},
   MRCLASS = {52B70 (53C20)},
  MRNUMBER = {1961004},
       DOI = {10.1007/s00454-002-0743-x},
       URL = {https://doi.org/10.1007/s00454-002-0743-x},
}

@article {G,
    AUTHOR = {Gaffney, Matthew P.},
     TITLE = {The harmonic operator for exterior differential forms},
   JOURNAL = {Proc. Nat. Acad. Sci. U.S.A.},
  FJOURNAL = {Proceedings of the National Academy of Sciences of the United
              States of America},
    VOLUME = {37},
      YEAR = {1951},
     PAGES = {48--50},
      ISSN = {0027-8424},
   MRCLASS = {53.0X},
  MRNUMBER = {48138},
MRREVIEWER = {K.\ Kodaira},
       DOI = {10.1073/pnas.37.1.48},
       URL = {https://doi.org/10.1073/pnas.37.1.48},
}

@article {HJ,
    AUTHOR = {Horak, Danijela and Jost, J\"urgen},
     TITLE = {Spectra of combinatorial {L}aplace operators on simplicial
              complexes},
   JOURNAL = {Adv. Math.},
  FJOURNAL = {Advances in Mathematics},
    VOLUME = {244},
      YEAR = {2013},
     PAGES = {303--336},
      ISSN = {0001-8708,1090-2082},
   MRCLASS = {55U10 (18G30 31C20)},
  MRNUMBER = {3077874},
MRREVIEWER = {Paul\ G.\ Goerss},
       DOI = {10.1016/j.aim.2013.05.007},
       URL = {https://doi.org/10.1016/j.aim.2013.05.007},
}

@book {JP,
    AUTHOR = {Jorgensen, Palle E. T. and Pearse, Erin P. J.},
     TITLE = {Operator theory and analysis of infinite networks---theory and
              applications},
    SERIES = {Contemporary Mathematics and Its Applications: Monographs,
              Expositions and Lecture Notes},
    VOLUME = {7},
 PUBLISHER = {World Scientific Publishing Co. Pte. Ltd., Hackensack, NJ},
      YEAR = {[2023] \copyright 2023},
     PAGES = {lv+392},
      ISBN = {[9789811265518]; [9789811265532]},
   MRCLASS = {47-02 (05Cxx 47B25 47B93 90B10 94Axx)},
  MRNUMBER = {4641241},
MRREVIEWER = {Bibhas\ Adhikari},
}

@book {K,
    AUTHOR = {Kato, Tosio},
     TITLE = {Perturbation theory for linear operators},
    SERIES = {Classics in Mathematics},
      NOTE = {Reprint of the 1980 edition},
 PUBLISHER = {Springer-Verlag, Berlin},
      YEAR = {1995},
     PAGES = {xxii+619},
      ISBN = {3-540-58661-X},
   MRCLASS = {47A55 (46-00 47-00)},
  MRNUMBER = {1335452},
}

@book {KLW,
    AUTHOR = {Keller, Matthias and Lenz, Daniel and Wojciechowski, Rados\l
              aw K.},
     TITLE = {Graphs and discrete {D}irichlet spaces},
    SERIES = {Grundlehren der mathematischen Wissenschaften},
    VOLUME = {358},
 PUBLISHER = {Springer, Cham},
      YEAR = {2021},
     PAGES = {xv+668},
      ISBN = {978-3-030-81458-8; 978-3-030-81459-5},
   MRCLASS = {05-01 (05C63 35P05)},
  MRNUMBER = {4383783},
       DOI = {10.1007/978-3-030-81459-5},
       URL = {https://doi.org/10.1007/978-3-030-81459-5},
}

@article{lew2025eigenvalue,
  title={An eigenvalue interlacing approach to {G}arland's method},
  author={Lew, Alan},
  journal={arXiv preprint arXiv:2508.17279},
  year={2025}
}

@article {Parzanchevski,
    AUTHOR = {Parzanchevski, Ori and Rosenthal, Ron},
     TITLE = {Simplicial complexes: spectrum, homology and random walks},
   JOURNAL = {Random Structures Algorithms},
  FJOURNAL = {Random Structures \& Algorithms},
    VOLUME = {50},
      YEAR = {2017},
    NUMBER = {2},
     PAGES = {225--261},
      ISSN = {1042-9832,1098-2418},
   MRCLASS = {05C81 (05C50 05D40 55U10 60D05 60G50)},
  MRNUMBER = {3607124},
MRREVIEWER = {Christian\ Stump},
       DOI = {10.1002/rsa.20657},
       URL = {https://doi.org/10.1002/rsa.20657},
}

@article {RT,
    AUTHOR = {Rosenthal, Ron and Tenenbaum, Lior},
     TITLE = {Simplicial spanning trees in random {S}teiner complexes},
   JOURNAL = {Combinatorica},
  FJOURNAL = {Combinatorica. An International Journal on Combinatorics and
              the Theory of Computing},
    VOLUME = {43},
      YEAR = {2023},
    NUMBER = {3},
     PAGES = {613--650},
      ISSN = {0209-9683,1439-6912},
   MRCLASS = {05E45 (05C05 05C80 60B99 60C05)},
  MRNUMBER = {4630470},
MRREVIEWER = {Junyao\ Pan},
       DOI = {10.1007/s00493-023-00038-3},
       URL = {https://doi.org/10.1007/s00493-023-00038-3},
}

@book {T,
    AUTHOR = {Thaller, Bernd},
     TITLE = {The {D}irac equation},
    SERIES = {Texts and Monographs in Physics},
 PUBLISHER = {Springer-Verlag, Berlin},
      YEAR = {1992},
     PAGES = {xviii+357},
      ISBN = {3-540-54883-1},
   MRCLASS = {81Q10 (35Q40 47N50 58G25)},
  MRNUMBER = {1219537},
MRREVIEWER = {P.\ A.\ Mishnayevskiy},
       DOI = {10.1007/978-3-662-02753-0},
       URL = {https://doi.org/10.1007/978-3-662-02753-0},
}

@article{ballmann1997l2,
  title={On {L}2-cohomology and property ({T}) for automorphism groups of polyhedral cell complexes},
  author={Ballmann, Werner and {\'S}wiatkowski, Jacek},
  journal={Geometric and Functional Analysis},
  volume={7},
  pages={615--645},
  year={1997},
  publisher={Springer}
}

@article {oppenheim_local,
    AUTHOR = {Oppenheim, Izhar},
     TITLE = {Local spectral expansion approach to high dimensional
              expanders {P}art {I}: {D}escent of spectral gaps},
   JOURNAL = {Discrete Comput. Geom.},
  FJOURNAL = {Discrete \& Computational Geometry. An International Journal
              of Mathematics and Computer Science},
    VOLUME = {59},
      YEAR = {2018},
    NUMBER = {2},
     PAGES = {293--330},
      ISSN = {0179-5376,1432-0444},
   MRCLASS = {05E45 (05A20)},
  MRNUMBER = {3755725},
MRREVIEWER = {Xiaogang\ Liu},
       DOI = {10.1007/s00454-017-9948-x},
       URL = {https://doi.org/10.1007/s00454-017-9948-x},
}

@article {GundertWagner,
    AUTHOR = {Gundert, Anna and Wagner, Uli},
     TITLE = {On eigenvalues of random complexes},
   JOURNAL = {Israel J. Math.},
  FJOURNAL = {Israel Journal of Mathematics},
    VOLUME = {216},
      YEAR = {2016},
    NUMBER = {2},
     PAGES = {545--582},
      ISSN = {0021-2172,1565-8511},
   MRCLASS = {55U10 (05C80 05E45 58J50 60D05)},
  MRNUMBER = {3557457},
MRREVIEWER = {Dirk\ Sch\"utz},
       DOI = {10.1007/s11856-016-1419-1},
       URL = {https://doi.org/10.1007/s11856-016-1419-1},
}

\end{document}